\newcommand{\R}{\ensuremath{\mathbb{R}}}
\newcommand{\pp}{\ensuremath{\mathbb{P}}}
\newcommand{\so}{\ensuremath{\mathfrak{so}}}
\newcommand{\g}{\ensuremath{\mathfrak{g}}}
\newcommand{\h}{\ensuremath{\mathfrak{h}}}
\DeclareMathOperator{\tr}{tr}
\DeclareMathOperator{\grad}{grad}
\DeclareMathOperator{\spa}{span}
\DeclareMathOperator{\dd}{d}
\DeclareMathOperator{\stab}{stab}
\DeclareMathOperator{\Ad}{Ad}
\DeclareMathOperator{\id}{id}
\newtheorem{theorem}{Theorem}
\newtheorem{proposition}[theorem]{Proposition}
\newtheorem{remark}[theorem]{Remark}
\newtheorem{example}[theorem]{Example}
\newtheorem{corollary}[theorem]{Corollary}
\newtheorem{definition}[theorem]{Definition}
\newtheorem{lemma}[theorem]{Lemma}
\newenvironment{proof}{{\bf Proof:} }{\mbox{ }\hfill$\square$}
\def\etal{\textit{et al.\ }}
\def\frameA{\mbox{$\{A\}$}}
\def\frameB{\mbox{$\{B\}$}}
\begin{document}
\title{Observer design for invariant systems with homogeneous observations}
\author{C.~Lageman%
\thanks{C. Lageman is with the Department of Electrical Engineering and Computer Science, 
        Universit{\'e} de Li{\`e}ge,
        B-4000 Li{\`e}ge Sart-Tilman,
        Belgium,
        {\tt\footnotesize christian.lageman@montefiore.ulg.ac.be}. His work was supported by
        the Australian Research Council Centre of Excellence for
        Mathematics and Statistics of Complex Systems.},
        J.~Trumpf%         
\thanks{J. Trumpf is with the Department of Information Engineering,
        The Australian National University, Canberra ACT 0200,
        Australia,
        {\tt\footnotesize Jochen.Trumpf@anu.edu.au}.}
        and R.~Mahony%
\thanks{R. Mahony is with the Department of Engineering,
        The Australian National University, Canberra ACT 0200,
        Australia,
	    {\tt\footnotesize Robert.Mahony@anu.edu.au}.}}
\maketitle

\begin{abstract}
This paper considers the design of nonlinear observers for
invariant systems posed on finite-dimensional connected Lie groups
with measurements generated by a transitive group action on an associated
homogeneous space.  We consider the case where the group action
has the opposite invariance to the system invariance and show that
the group kinematics project to a minimal realisation of the
systems observable dynamics on the homogeneous output space.  The
observer design problem is approached by designing an observer for
the projected output dynamics and then lifting to the Lie-group. A
structural decomposition theorem for observers of the projected
system is provided along with characterisation of the invariance
properties of the associated observer error dynamics.  We propose
an observer design based on a gradient-like construction that
leads to strong (almost) global convergence properties of
canonical error dynamics on the homogeneous output space.  The
observer dynamics are lifted to the group in a natural manner and
the resulting gradient-like error dynamics of the observer on the
Lie-group converge almost globally to the unobservable subgroup of
the system, the stabiliser of the group action.
\end{abstract}

%%%%%%%%%%%%%%%%%%%%%%%%%%%%%%%%%%%%%%%%%%%%%%%%%%%%%%%%%%%%%%%%
\section{Introduction}\label{sec:1_Intro}

There is a growing body of work concerned with the design of
nonlinear observers for invariant systems on Lie groups. Recent
results are partly motivated by the need for highly robust and
computationally simple state estimation algorithms for robotic
vehicles.  The classical approach to state estimation for such
applications is based on nonlinear filtering techniques such as
extended Kalman filters \cite{2005_Anderson} or particle filters
\cite{2001_Doucet}. Non-linear observers offer less information
than a nonlinear filter (state-estimates rather than full
\emph{posterior} distributions for the state), however, it is
often possible to prove strong stability results with large
(almost global) basins of attraction and provide computationally
simple implementations of the observers. Historically, work on
nonlinear observers for invariant systems was divided into a body
of applied work, closely linked to specific examples, and a
separate body of theoretical work concerned with fundamental
systems theory for invariant systems. One of the earliest applied
results concerned the design of a nonlinear observer for attitude
estimation of a rigid-body using the unit quaternion
representation of the special-orthogonal Lie group
\cite{Sal1991_TAC}.  This work is seminal to a series of papers
undertaken over the last fifteen years that develop nonlinear
attitude observers for rigid-body dynamics
\cite{1999_Nijmeijer,ThiSan2003_TAC,Maithripala2004,Metni2005a,2005_MahHamPfl-C64,Bonnabel2006_acc,Metni_etal2006_CEP,2008_MahHamPfl.TAC,2007_Tayebi_cdc,2007_Martin_cdc,2008_Vasconcelos.ifac},
exploiting either the unit quaternion group structure or the
rotation matrix Lie-group structure of $SO(3)$.  Recent observer
designs have comparable performance to state-of-the-art nonlinear
filtering techniques \cite{2007_Crassidis.JGCD}, generally have
much stronger global stability and robustness properties
\cite{2008_MahHamPfl.TAC}, and are simple to implement.  The full
pose estimation problem has also attracted recent attention
\cite{VikFos_2001_CDC,RehGho2003_TAC,2007_BalHamMahTru_ECC,2007_Vasconcelos_cdc}
in which case the underlying state space is the special Euclidean
group $SE(3)$ comprising both attitude and translation of a
rigid-body.  Another promising body of applied work involves
development of heading reference systems for UAV systems
\cite{2007_Martin_cdc,2008_Martin.ifac}.  The theoretical
investigation of observability and controllability of invariant
systems on Lie-groups was first considered in the seventies
\cite{b:72,js:72,jk:81} in the context of the formulation of
systems theory on Lie-groups and coset spaces. Parallel work by
Sussmann investigated quotient structure of realisations of
nonlinear systems and associated observability and controllability
properties \cite{sushi:74,sushi:77}.  There have been only
occasional extensions of this material since the early work, for
example \cite{cdm:90,ak:96}.  A characteristic of all the work
mentioned above is that authors consider only matched invariance
assumptions on the system. That is, all the invariances in the
system are of the same handedness; left invariant kinematics, left
invariant outputs, left invariant metrics, etc. Recent work on
understanding the generic structure of observers for left
invariant systems on Lie-groups
\cite{Bonnabel2005_ch,2006_HamMah_ICRA,2008_MahHamPfl.TAC,2008_Bonnabel.TAC}
has challenged this formulation and lead to an understanding of
the importance of invariance properties of observer error dynamics
and the underlying properties of the observer design problem
\cite{2008_Lageman_mtns,2008_Bonnabel.ifac,toappear_Lageman.TAC}.
The present paper contains further results in this direction.

In this paper, we consider the design of nonlinear observers for
state space systems where the state evolves on a finite
dimensional, connected Lie group and the measurements are
generated by a transitive group action acting on an associated
homogeneous space.  We consider \emph{complementary} output
invariance; that is, for a left invariant system we consider a
right invariant output group action, a structure that is suggested
by a body of recent work
\cite{2008_Bonnabel.TAC,2008_MahHamPfl.TAC,2008_Bonnabel.ifac,toappear_Lageman.TAC}.
We show that a system with complementary outputs projects to a
quotient system on the homogeneous output space corresponding to
the minimal observable realisation of the full system, and
consequently, that the stabiliser of the output group action is
the unobservable subgroup of the system.  Since the projected
system is a minimal realisation of the full system it is natural
to study the observer design problem for this system.  We
introduce the concept of synchrony of observer and plant
\cite{toappear_Lageman.TAC} for projected systems and use this to
define a ``template'' for observer design as a combination of an
internal model and an innovation term.  The invariance properties
of the innovation term that ensure the error dynamics of the
observer/system couple are autonomous are also fully
characterised.  The second part of the paper introduces a
constructive process for observer design based on the design of
gradient-like innovation terms.  We consider cost functions
defined on the homogeneous output space and design a gradient-like
innovation for the projected system. This design can be lifted to
an observer on the Lie-group state space of the full system. Under
suitable assumptions on the cost function we obtain (almost)
global asymptotic stability of the observer on the homogeneous
output space and corresponding asymptotic stability to the
unobservable subgroup on the Lie-group state-space.  An example on
$SO(3)$ is provided to demonstrate the applications of results
provided in the body of the paper.

After the introduction, Section~\ref{sec:2_problem_formulation}
provides an overview of the systems considered and notation used.
Section~\ref{sec:3_structure} studies the structure of invariant
systems with complementary invariance in the outputs.
Section~\ref{sec:4_induced_props} studies the properties of
observers for the projected system on the homogeneous output
space. Section~\ref{sec:5_design} considers the question of
observer synthesis.  First we consider synthesis of observers for
the projected system (Subsection \ref{sec:5_design_induced}) and
then the lift of these observers onto the Lie-group state-space
(Subsection~\ref{sec:5_design_full}).  Section \ref{sec:6_example}
provides an example based on the attitude observers that have been
studied recently
\cite{Bonnabel2005_ch,2005_MahHamPfl-C64,2006_HamMah_ICRA,2008_MahHamPfl.TAC,2008_Bonnabel.TAC}.
A short paragraph of conclusions is also provided in
Section~\ref{sec:7_conclusions}.

%%%%%%%%%%%%%%%%%%%%%%%%%%%%%%%%%%%%%%%%%%%%%%%%%%%%%%%%%%%%%
\section{Invariant systems with homogeneous output space}\label{sec:2_problem_formulation}

In this section, we introduce the notation used throughout the
paper and discuss some of the intuition in the models considered.

The symbol $G$ denotes a connected Lie group with Lie algebra
$\g$.   We consider left invariant systems on $G$ of the form
\begin{equation}\label{eq:sys_state}
\dot X = X u.
\end{equation}
An input $u \colon \R \rightarrow \g$ is called admissible if the
resulting time-variant differential equation on the Lie group has
unique solutions for all initial values.  We assume that the
velocity $u$ is measured and can be used in the observer
construction.  The system \eqref{eq:sys_state} is termed left
invariant since it is invariant under application of left
multiplication on the group $T_X L_Y \dot{X} = (Y X) u $ where
$L_Y : X \mapsto Y X$ and $T_X$ denotes the tangent map evaluated
at $X$. An equivalent definition is used for right
invariance with respect to right multiplication $R_Y : X \mapsto X
Y$ on the group.

A smooth, connected manifold is denoted $M$ and a Lie-group action on $M$ is
denoted $h\colon G\times M\rightarrow M$, where the $h$ notation
anticipates that the group action will be acting as an output map.
A group action on a manifold is called a right action if for all
$X, Y\in G$, $y\in M$: $h(X,h(Y,y))= h(YX,y)$ and a left action if
$h(X,h(Y,y))=h(XY,y)$.  All actions considered in this paper are
transitive, that is, for any $x, y \in M$ there exists $X \in G$
such that $h(X,x) = y$.

We choose a fixed a point $y_0\in M$ to be the `reference' output.
We consider outputs for the system \eqref{eq:sys_state} generated
by the group action $y = h(X,y_0)$, $y, y_0 \in M$. In particular,
for the fixed reference $y_0$ we define $h_{y_0} : G\rightarrow M$,
$y_0 \in M$, and the output
\begin{equation}\label{eq:sys_out}
y = h_{y_0}(X) = h(X,y_0).
\end{equation}
We use the terminology \textbf{homogeneous output space} to refer
to a homogeneous manifold $M$ associated with the scenario
described above. 
In general
the group kinematics considered may be either left or right
invariant and the group action can be independently either a left
or right action.  We term the case where the outputs are generated
by a group action of opposite invariance to the system invariance
as \textbf{complementary observations}.  Conversely, we term the
case where both the system and outputs have the same invariance as
\textbf{matched observations}.  That is, right invariant outputs
are complementary to left invariant group kinematics and matched
with right invariant group kinematics, and \textit{vice versa} for
left invariant outputs.

The stabiliser of an element $y \in G$ is given by
\[
\stab(y) = \{ X\in G\;|\; h(X,y)=y\},
\]
and is a subgroup of $G$. We will mostly be interested in
$\stab(y_0)$ and 
assume that it is a closed subgroup of $G$.
We use the notation $h_X: M \rightarrow M$ for the symmetry map
\[
h_X (y) := h(X,y).
\]

While we have chosen to work explicitly with left invariant system
dynamics on the group \eqref{eq:sys_state}, the case of right
invariant dynamics is analogous and all the results in the paper
will carry through with the obvious changes.  The choice of
complementary or matched output invariance, however, is a key
assumption.  The main results of the paper require that the output
observations are complementary and the results do not hold for
matched observations. It is instructive to consider an example to
provide some physical intuition for this choice.

\begin{example} \label{ex:attitude_1}
We use the attidue estimation problem as an illustration \cite{Bonnabel2006_acc,2008_Bonnabel.TAC,2008_Bonnabel.ifac,Bonnabel2005_ch,2007_Crassidis.JGCD,2006_HamMah_ICRA,2008_Lageman_mtns,toappear_Lageman.TAC,2005_MahHamPfl-C64,2008_MahHamPfl.TAC,Maithripala2004,2004_Maithripala_acc,2008_Martin.ifac,2007_Martin_cdc,Metni_etal2006_CEP,Metni2005a,Sal1991_TAC,2007_Tayebi_cdc,TayMcG_2006_CST,ThiSan2003_TAC,2008_Vasconcelos.ifac,VikFos_2001_CDC}.
The attitude of a rigid body, with body-fixed-frame $\frameB$,
measured with respect to an inertial frame $\frameA$, can be
identified with an element $R$ of $SO(3)$.  The left invariant
dynamics
\begin{equation}\label{eq:ex:sys}
\dot{R} = R \Omega_\times
\end{equation}
on $SO(3)$ correspond to the natural body-fixed-frame kinematics
of the system.  Here $\Omega = (\Omega_1, \Omega_2, \Omega_3)^\top
\in \frameB$ is the angular velocity of the rigid-body expressed
in the body-fixed-frame and
\[
\Omega_\times = \left(\begin{array}{ccc}
0 & -\Omega_3 & \Omega_2 \\
\Omega_3 & 0 & -\Omega_1 \\
-\Omega_2 & \Omega_1 & 0
\end{array} \right).
\]

Consider the case where only partial information on the rigid-body
attitude is measured.  The most common situation is when an
inertial direction, such as the magnetic field, or gravitational
field, is measured by sensor systems such as magnetometers or
accelerometers on board a vehicle.  The inertial direction $y_0
\in \frameA$ of the measured direction is known \emph{a-priori},
for example, the gravitational direction lies in the $z$-axis of
the inertial frame.  The measured direction in the
body-fixed-frame,
\[
y = R^\top y_0 \in \frameB,
\]
is obtained by using the attitude matrix to transform the inertial
direction $y_0$ from the inertial frame into the body fixed frame.
The associated output group action that we consider is
\[
h : G \times S^2 \rightarrow S^2, \quad h(R, y_0) = R^\top y_0,
\]
a right-invariant group action.  This example demonstrates the
case of complementary outputs, typical for applications involving
robotic vehicles where measurements are made by embarked sensor
systems.

An alternatively scenario involves a fixed sensor system that
observes an autonomous vehicle in motion.  An example would be a
computer vision system that extracts a principle axis of symmetry
of an observed vehicle from a video sequence of images.  In this case the
known data is a direction $\bar{y}_0 \in \frameB$ that denotes the
orientation of the symmetry of the vehicle in the
body-fixed-frame.  The measurement is obtained by transforming
$y_0$ into the inertial frame
\[
\bar{y} = R y_0 \in \frameA.
\]
The associated group action would be
\[
\bar{h} : G \times S^2 \rightarrow S^2, \quad h(R, \bar{y}_0) = R \bar{y}_0,
\]
a left-invariant group action.  This second example demonstrates
the case of matched observations.  There are fewer applications where
matched observations are the natural model.
\end{example}

The complementary case of a left invariant system
\eqref{eq:sys_state} with right output action \eqref{eq:sys_out}
is the case that has attracted interest recently
\cite{2008_Bonnabel.TAC,2008_Lageman_mtns,2008_Bonnabel.ifac,toappear_Lageman.TAC}
and is most important for applications in robotic vehicles.  It
turns out, perhaps somewhat counter-intuitively, that the
structure of invariant systems with complementary outputs is
relatively straightforward compared to the case of matched
outputs.  A number of recent works have shown that the case of
complementary outputs yields autonomous error dynamics when the
observer is properly designed
\cite{2008_Lageman_mtns,2008_Bonnabel.ifac,2008_Bonnabel.TAC,toappear_Lageman.TAC}
and this work has lead to some promising applied results
\cite{2007_Martin_cdc,2008_Martin.ifac}. Our work can be viewed as
providing further perspective and extension of this research
direction.  The case of matched observations was studied in some
detail in the seventies \cite{b:72,js:72,jk:81} although observer
design was not a focus of this early work.

\begin{remark}\label{rem:control}
In the system \eqref{eq:sys_state} and \eqref{eq:sys_out}, the map
$u \mapsto X u$ is surjective and the system is fully
controllable. In fact, the controllability of the system is not of
particular interest in the main body of the paper except in the
context of the realisation results we present.  The results will
hold equally well for systems with restricted inputs
(corresponding to limited degrees of control actuation) as long as
system is controllable.   
If the system is not controllable then
the results presented in the paper apply to the system restricted
to the maximal controllable subgroup.  In this case the
homogeneous output space considered must be restricted to the
orbit of the maximal controllable subgroup.
\end{remark}

%%%%%%%%%%%%%%%%%%%%%%%%%%%%%%%%%%%%%%%%%%%%%%%%%%%%%%%%%%%%%%%%%%%%%%%%%%%%%%%%%%%%%%%%%%%%
\section{Structure of left invariant systems with complementary observations}\label{sec:3_structure}

In this section, we consider the structure of left invariant
systems with complementary observations.  The main result shows
that there is a natural projection of the full system dynamics
onto a system on the homogeneous output space that is a minimal
realisation of the input-output behaviour of the full system, and
that this in turn characterises the unobservable subspace on $G$
as $\stab(y_0)$.

In the remainder of this paper, we consider left-invariant system
kinematics \eqref{eq:sys_state} and complementary right invariant
`output' group action $h$ \eqref{eq:sys_out}.  Any system on a
manifold $N$ given globally by $\dot{x}= f(x,u)$ can be considered
as a bundle map $f\colon B\rightarrow TN$ from a trivial smooth
fiber bundle $B$ over $N$ to the tangent bundle $TN$.  
In general any bundle map from a fiber bundle $B$ over $N$
to $TN$ can be considered as a realization of a control system.
We will use
the notion of a system with symmetry \cite{s:81,gm:85,ns:85}. A
system $f\colon B\rightarrow TN$ with output map $h\colon
N\rightarrow M$ has a symmetry $H$ (where $H$ is a Lie group) if
there are left actions $S^B\colon H\times B \rightarrow B$,
$S^N\colon H\times N\rightarrow N$ and a right action $S^M\colon
H\times M \rightarrow M$, such that for all $X\in H$, $v\in B$,
$x\in M$
\begin{eqnarray*}
f(S^B(X,v)) &=& T S_X^N f(v), \\
h(S^N(X,x)) &=& S^M(X,h(x)) .
 \end{eqnarray*}
Here $S^N_X$ denotes the map $v\mapsto S^N(X,v)$.

\begin{remark}
Note that the systems with symmetries considered in
\cite{gm:85,ns:85} do not have specific output maps. The work
\cite{s:81} considers symmetries for systems with outputs,
however, the groups considered were commutative and the difference
between left and right symmetries was lost.  The definition we use
is extended in the natural manner to conform to the structure we
consider.  The factorisation results for system with symmetries
\cite{gm:85,ns:85,s:81}, as we will use here, have also motivated
recent work on general quotient systems \cite{tp:05}.
\end{remark}

\begin{proposition}
The stabiliser $\stab(y_0)$ is a symmetry of system
\eqref{eq:sys_state} with complementary observations
\eqref{eq:sys_out}.
\end{proposition}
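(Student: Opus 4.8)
The plan is to verify the definition of a system with symmetry directly for the group $H = \stab(y_0)$. I need to exhibit three actions---a left action $S^B$ on the bundle $B$, a left action $S^N$ on the state space $N = G$, and a right action $S^M$ on the output space $M$---and check the two intertwining identities. Since the system \eqref{eq:sys_state} is $\dot X = Xu$, the natural state space is $N = G$ and the bundle is $B = G \times \g$ with bundle map $f(X,u) = Xu \in T_X G$. For an element $Z \in \stab(y_0)$ I would define $S^N_Z = R_Z$, right multiplication $X \mapsto XZ$; note that although the kinematics are left invariant, it is the \emph{right} multiplications that play the role of symmetries here, matching the complementary structure. Correspondingly I set $S^B(Z,(X,u)) = (XZ, \Ad_{Z^{-1}} u)$ and take $S^M = h$ itself, the given right action restricted to $\stab(y_0)$.

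First I would confirm that these are genuine actions of $\stab(y_0)$ of the correct handedness: $S^N$ and $S^B$ are left actions because $R_Z R_{Z'} = R_{Z'Z}$ and the $\Ad$ factor composes correctly, while $S^M = h$ is a right action by hypothesis. Next I would check the kinematic intertwining $f(S^B(Z,(X,u))) = T S^N_Z\, f(X,u)$. The left side is $f(XZ, \Ad_{Z^{-1}} u) = (XZ)(Z^{-1} u Z) = X u Z$, while the right side is $T_X R_Z (Xu) = (Xu)Z$; these agree, so the first identity holds. This computation is the technical heart and confirms the particular twist by $\Ad_{Z^{-1}}$ is exactly what is needed to make right multiplication a symmetry of the left invariant kinematics.

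Finally I would verify the output intertwining $h(S^N_Z(X)) = S^M(Z, h(X))$, that is $h_{y_0}(XZ) = h(Z, h_{y_0}(X))$. Unwinding, the left side is $h(XZ, y_0)$ and, since $h$ is a right action, $h(XZ, y_0) = h(X, h(Z, y_0))$. Because $Z \in \stab(y_0)$ we have $h(Z,y_0) = y_0$, so this equals $h(X,y_0) = h_{y_0}(X)$; and the right side is $h(Z, h(X,y_0)) = h(Z, h_{y_0}(X))$. The main obstacle is a bookkeeping one rather than a deep one: keeping the left/right conventions consistent so that the right action property of $h$ is invoked in the correct order and the stabiliser condition $h(Z,y_0) = y_0$ is used precisely where the $Z$ factor can be absorbed. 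Once the conventions are fixed, both identities follow by direct substitution, establishing that $\stab(y_0)$ is a symmetry of the system.
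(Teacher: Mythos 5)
Your construction goes in the wrong direction: you let $\stab(y_0)$ act by right multiplication, whereas the definition of symmetry stated in the paper requires $S^N$ and $S^B$ to be \emph{left} actions. The identity you cite, $R_Z\circ R_{Z'}=R_{Z'Z}$, is precisely the contravariant composition law of a \emph{right} action, so the claim that your $S^N$ is a left action is false, and the same applies to your $S^B$. Moreover, your verification of the output intertwining misapplies the right-action axiom: from $h(X,h(Y,y))=h(YX,y)$ one gets $h(X,h(Z,y_0))=h(ZX,y_0)$, not $h(XZ,y_0)$, so the chain ``$h(XZ,y_0)=h(X,h(Z,y_0))=h(X,y_0)$'' is wrong; indeed your computation ends with the left side equal to $h_{y_0}(X)$ and the right side equal to $h(Z,h_{y_0}(X))$ without these ever being reconciled. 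Had you applied the axiom correctly you would have found $h_{y_0}(XZ)=h(Z,h_{y_0}(X))$ for \emph{every} $Z\in G$, which shows your candidate symmetry never uses $Z\in\stab(y_0)$ at all and therefore cannot be the content of the proposition.

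The intended symmetry is left multiplication: $S^N(Z,X)=ZX$, $S^B(Z,(X,u))=(ZX,u)$ (no $\Ad$ twist is needed, since $f(ZX,u)=ZXu=T_XL_Z(Xu)$ is immediate from the left invariance of \eqref{eq:sys_state}), together with the \emph{trivial} action $S^M(Z,y)=y$ on $M$. Here the stabiliser property is exactly what makes the output condition work: for $Z\in\stab(y_0)$, $h_{y_0}(ZX)=h(ZX,y_0)=h(X,h(Z,y_0))=h(X,y_0)=h_{y_0}(X)$, using that $h$ is a right action. This invariance of $h_{y_0}$ under the left $\stab(y_0)$-action is the structural fact the proposition records, and it is what allows Theorem \ref{thm:lr} to quotient $G$ by $\stab(y_0)$ acting on the left and identify the orbit space with $M$. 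Your right-multiplication orbits are the cosets $X\stab(y_0)$, on which $h_{y_0}$ is not constant, so the projection argument that the proposition is meant to enable would not go through.
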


\begin{proof}
The system \eqref{eq:sys_state} and \eqref{eq:sys_out} can be
regarded as a bundle map $f\colon G\times \g\rightarrow TG$ via
$f(X,u)= Xu$.  Define the left actions $S^B(Y,(X,u))=(YX,u)$ and
$S^N(Y,Xu)=YX u$ of $\stab(y_0)$ on $G\times \g$ and $TG$. Note
additionally that $\stab(y_0)$ acts on $M$ via $S^M(Y,y) = h(Y,y)
= y$. The symmetry condition on $f$ follows trivially from the
definition, while the symmetry condition on $h$ follows from the
definition of the stabiliser.
\end{proof}

The main structural property of the system \eqref{eq:sys_state}
with complementary observations \eqref{eq:sys_out} is that it
\emph{projects to a system on $M$}; that is, the system on $G$
maps via the group action to a system on $M$ with trivial output
map and this projected system is strongly equivalent to
\eqref{eq:sys_state} and \eqref{eq:sys_out} in the sense of
\cite{sushi:77}. In other words, the system on $G$ can be
projected to a system on $M$ with the same input-output behaviour.

\begin{theorem} \label{thm:lr}
The system \eqref{eq:sys_state} with complementary observations
\eqref{eq:sys_out} projects to the system
\begin{equation}\label{eq:induced}
\dot{y} = T_X h_{y_0} (Xu), \quad X\in h_{y_0}^{-1}(y)
\end{equation}
on $M$.
\end{theorem}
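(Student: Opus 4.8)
The plan is to show that the right-hand side of \eqref{eq:induced} is independent of the choice of preimage $X\in h_{y_0}^{-1}(y)$, so that \eqref{eq:induced} is a well-defined system on $M$, and then to verify by the chain rule that solutions of \eqref{eq:sys_state} are carried to solutions of \eqref{eq:induced} by the output map $h_{y_0}$. The crucial point — and essentially the only content of the statement — is the well-definedness, which is exactly where the complementary (right) invariance of the output action does the work.

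First I would identify the fibers of $h_{y_0}$. Suppose $h_{y_0}(X)=h_{y_0}(X')=y$, i.e.\ $h(X',y_0)=h(X,y_0)$. Acting on the left by $h(X^{-1},\cdot)$ and using the right-action identity, $h(X^{-1},h(X',y_0))=h(X'X^{-1},y_0)$ while $h(X^{-1},h(X,y_0))=h(e,y_0)=y_0$, so $h(X'X^{-1},y_0)=y_0$, that is $X'X^{-1}\in\stab(y_0)$. Hence the fiber over $y$ is the left coset $\stab(y_0)\,X$ (reflecting the identification $M\cong\stab(y_0)\backslash G$), and any other preimage has the form $X'=SX$ with $S\in\stab(y_0)$.

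Next I would exploit the invariance $h_{y_0}\circ L_S=h_{y_0}$ for every $S\in\stab(y_0)$, which follows from the right-action property $h(SX,y_0)=h(X,h(S,y_0))=h(X,y_0)$. Differentiating this identity of maps $G\to M$ at $X$ gives $T_{SX}h_{y_0}\circ T_X L_S=T_X h_{y_0}$. Combining this with the elementary relation $T_X L_S(Xu)=(SX)u$ (both sides equal $T_e L_{SX}(u)$) yields
\[
T_{SX}h_{y_0}\bigl((SX)u\bigr)=T_{SX}h_{y_0}\bigl(T_X L_S(Xu)\bigr)=T_X h_{y_0}(Xu),
\]
so the vector $T_X h_{y_0}(Xu)\in T_yM$ does not depend on the representative $X$ chosen in $h_{y_0}^{-1}(y)$. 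This establishes that \eqref{eq:induced} is a genuine input-driven system on $M$. I expect this step to be the only real obstacle: it is precisely the matching of the left-invariant kinematics $X\mapsto Xu$ with the oppositely (right-)invariant output action that forces the fibers to be \emph{left} cosets of $\stab(y_0)$ and makes the kinematic field invariant along those fibers. For matched observations the fibers would be right cosets, the analogous computation would fail, and no such projection would exist.

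Finally, if $X(\cdot)$ is any solution of $\dot X=Xu$ and $y(t):=h_{y_0}(X(t))$, then the chain rule gives $\dot y(t)=T_{X(t)}h_{y_0}(\dot X(t))=T_{X(t)}h_{y_0}(X(t)u)$, which is exactly \eqref{eq:induced} evaluated at the admissible lift $X(t)\in h_{y_0}^{-1}(y(t))$. Thus $h_{y_0}$ maps trajectories of \eqref{eq:sys_state}–\eqref{eq:sys_out} to trajectories of \eqref{eq:induced}, so the full system projects to \eqref{eq:induced} as claimed.
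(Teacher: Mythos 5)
Your proof is correct, and it reaches the conclusion by a genuinely more elementary route than the paper's. The paper invokes the general factorisation theory for control systems with symmetries: it notes that $\stab(y_0)$ acts freely and properly on $G$, $G\times\g$ and $TG$, appeals to the quotient construction for systems with symmetry to project the bundle map onto the orbit spaces, identifies those orbit spaces with $M\times\g$ and $TM$, and checks via commutative diagrams that the output map descends to the identity on $M$. You instead verify directly the two concrete facts that this machinery packages: (i) the fibre of $h_{y_0}$ over $y$ is the left coset $\stab(y_0)X$, and (ii) differentiating the identity $h_{y_0}\circ L_S=h_{y_0}$ for $S\in\stab(y_0)$ shows that the left-invariant field $X\mapsto Xu$ pushes forward consistently along each fibre, after which the chain rule carries solutions down. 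Your version has the merit of making explicit exactly where complementarity enters (a right output action forces the fibres to be \emph{left} cosets, which is what matches the left-invariant kinematics), a point the paper leaves implicit in the citation. What the paper's route buys is the surrounding structure: smoothness of the projected vector field from the general quotient-manifold theory, and the descent of the output map to the identity, which underpins the claim of strong equivalence of input--output behaviour made just before the theorem. To make your argument fully self-contained you should add that $h_{y_0}$ is a smooth submersion (since the action is transitive and $\stab(y_0)$ is closed), so the well-defined field on $M$ is automatically smooth, and remark that by transitivity every initial condition of \eqref{eq:induced} lifts to one of \eqref{eq:sys_state}, which together with uniqueness of solutions gives the equality of input--output behaviours.
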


\begin{proof}
Since the control system $f\colon G\times \g \rightarrow TG$,
$f(X,u)=Xu$ has the symmetry $\stab(y_0)$ which acts freely and
properly, it can be projected to a control system $\tilde{f}$ on
the corresponding orbit spaces \cite{ns:85}. As $\stab(y_0)$ acts
on $G\times\g$ and $TG$ by the corresponding left actions, we can
identify the orbit spaces $\stab(y_0)\diagdown G\times\g$ and
$\stab(y_0)\diagdown TG$ with $M\times \g$ and $TM$ by setting
$(\stab(y_0) X,u)= (h(X,y_0),u)$ and $\stab(y_0)(Xu) = T_X
h_{y_0}(Xu)$. The projected control system on $M$ can be written
as $\tilde{f}\colon M\times\g \rightarrow TM$, $\tilde{f}(y,u) =
T_X h_{y_0}(Xu)$ for $h(X,y)=y_0$. Note that this shows that
\eqref{eq:induced} is well-defined and smooth. The operations
above can be illustrated by the following commutative diagram:
$$
\begin{CD}
G\times \g @>>> \stab(y_0)\diagdown G\times\g @= M\times \g \\
@V{Xu}VV  @VVV @V{T_Xh_{y_0}(Xu))}VV \\
TG @>>> \stab(y_0)\diagdown TG @= TM
\end{CD}
$$
Since the output map $h$ is equivariant under the action of
$\stab(y_0)$ it induces a map $\tilde{h}$ between the orbit spaces
$\stab(y_0)\diagdown G$ and $M\diagup\stab(y_0)$. As $\stab(y_0)$
acts only trivially on $M$, we have that $M\diagup\stab(y_0) = M$.
The quotient $\stab(y_0)\diagdown G$ can be identified with $M$
via $\stab(y_0)X = h_{y_0}(X)$. The induced map $\tilde{h}$ has to
satisfy the commutative diagram
$$
\begin{CD}
G @>h_{y_0}>> M \\
@V{h_{y_0}}VV @V{\id}VV\\
M @>\tilde{h}>> M
\end{CD}
$$
where $\id_M$ denotes the identity.
Thus $\tilde{h}=\id_M$.
In summary we have the following commutative diagram
$$
\begin{CD}
G\times\g @>{Xu}>> TG @>{\pi_G}>> G @>{h_{y_0}}>> M\\
@VVV @VVV @VVV @V{\id_M}VV \\
M\times\g @>{T_Xh_{y_0}(Xu)}>> TM @>{\pi_M}>> M @>{\id_M}>> M\\
\end{CD}
$$
where $\pi_G\colon TG\rightarrow G$, 
$\pi_M\colon TM\rightarrow M$ are the canonical
projections of the tangent bundle.
It follows from the uniqueness of solutions for
admissible inputs that the system \eqref{eq:sys_state} and
\eqref{eq:sys_out} and the system \eqref{eq:induced} with full
state output have the same input-output behaviour.
\end{proof}

Theorem \ref{thm:lr} provides a significant insight into the
observability of the full system.  In particular, since the system
can be reduced to a fully-state observable system on the
observation space, the observed dynamics cannot provide any
additional information on the system state.

Recall that two states $X, Y\in G$ are called indistinguishable,
if for any admissible input $u$ the solutions $X(t)$, $Y(t)$ of
\eqref{eq:sys_state} and \eqref{eq:sys_out}
 produce the same output
$h(X(t),y_0) = h(Y(t),y_0)$ \cite{b:72,sushi:77}.

\begin{corollary}
Consider the system \eqref{eq:sys_state} with complementary
observations \eqref{eq:sys_out}.  Two states $X,Y\in G$ are
undistinguishable if and only if $X Y^{-1}$ is contained in the
stabiliser subgroup $\stab(y_0)$ of $y_0$. In particular, a state
$X\in G$ is undistinguishable from the identity in $G$ if and only
if $X\in \stab(y_0)$.
\end{corollary}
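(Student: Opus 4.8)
The plan is to prove the corollary as a direct consequence of Theorem \ref{thm:lr}. The theorem establishes that the full system projects to a fully-state-observable system \eqref{eq:induced} on $M$ with identity output map, and that the two systems share the same input-output behaviour. First I would unwind the definition of indistinguishability: $X$ and $Y$ are indistinguishable precisely when the output trajectories $h(X(t),y_0)$ and $h(Y(t),y_0)$ coincide for every admissible input $u$, where $X(t)$ and $Y(t)$ solve \eqref{eq:sys_state} from the respective initial conditions.

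The key computational step exploits the explicit form of the solution to the left-invariant kinematics. For a fixed admissible input $u$, let $\Phi(t)$ denote the solution of $\dot\Phi = \Phi u$ with $\Phi(0)=\id$; then by left invariance the solutions from initial conditions $X$ and $Y$ are $X(t)=X\Phi(t)$ and $Y(t)=Y\Phi(t)$. The outputs are therefore $h(X\Phi(t),y_0)$ and $h(Y\Phi(t),y_0)$. Since $h$ is a right action, $h(X\Phi(t),y_0)=h(\Phi(t),h(X,y_0))$ and similarly for $Y$, so the two output trajectories agree for all $t$ and all $u$ if and only if $h(X,y_0)=h(Y,y_0)$; that is, if and only if the two initial outputs agree. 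This is exactly the statement that the projected system with trivial output map distinguishes all states, which is what Theorem \ref{thm:lr} guarantees.

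It then remains to translate the condition $h(X,y_0)=h(Y,y_0)$ into the stabiliser statement. Using the right-action property once more, $h(X,y_0)=h(Y,y_0)$ is equivalent to $h(Y^{-1},h(X,y_0))=y_0$, which by $h(Y^{-1},h(X,y_0))=h(XY^{-1},y_0)$ reads $h(XY^{-1},y_0)=y_0$, i.e. $XY^{-1}\in\stab(y_0)$. The special case follows by setting $Y=\id$. I expect the only subtlety to be bookkeeping of whether the right-action composition produces $XY^{-1}$ rather than $Y^{-1}X$; this must be checked carefully against the convention $h(X,h(Y,y))=h(YX,y)$ fixed in Section \ref{sec:2_problem_formulation}, and is the one place where the complementary (right-action) invariance is essential rather than incidental.
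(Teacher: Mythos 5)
Your proof is correct and follows the same skeleton as the paper's: indistinguishability is reduced to equality of the images $h(X,y_0)=h(Y,y_0)$ in $M$, and that equality is then translated into $XY^{-1}\in\stab(y_0)$ via the right-action composition rule; your care over whether the convention $h(X,h(Y,y))=h(YX,y)$ produces $XY^{-1}$ rather than $Y^{-1}X$ is exactly the right place to be careful, and you resolve it correctly. The one genuine difference is in how the first equivalence is obtained: the paper simply invokes the full observability of the projected system from Theorem~\ref{thm:lr} (trivial output map plus identical input-output behaviour of the two realisations), whereas you re-derive it from scratch through the explicit flow $X(t)=X\Phi(t)$, $Y(t)=Y\Phi(t)$ and the identity $h(X\Phi(t),y_0)=h(\Phi(t),h(X,y_0))$. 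Your route is more elementary and self-contained --- it shows that the forward implication needs only the evaluation at $t=0$ and the converse needs only the right-action property, so the corollary does not actually depend on the projection machinery at all --- at the cost of not exhibiting the corollary as the observability statement dual to the minimal-realisation result, which is the structural point the paper is emphasising.
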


\begin{proof} Since the projected system on $M$ is fully observable,
two states in $X,Y\in G$ are indistinguishable
if and only if they are mapped onto the same $y\in M$.
Two states $X,Y\in G$ are mapped onto the same $y\in M$
if and only if $X Y^{-1}\in \stab(y_0)$.
\end{proof}

\begin{remark}
One can view Theorem \ref{thm:lr} as a special case of Sussmann's
result \cite{sushi:77} of the existence of a minimal realisation.
Sussmann's proof is based on factoring the state space by the
equivalence relation of indistinguishable states. In our case, two
states $X_1$, $X_2$ are indistinguishable if and only if $X_1
X_2^{-1}\in\stab(y_0)$. Hence, factoring by this equivalence
relation is equivalent to factoring by $\stab(y_0)$. This is
another way to see that projecting the system on $G$ onto $M$
yields a minimal realisation of \eqref{eq:sys_state} and
\eqref{eq:sys_out}.
\end{remark}

%%%%%%%%%%%%%%%%%%%%%%%%%%%%%%%%%%%%%%%%%%%%%%%%%%%%%%%%%%%%%%%%%%%%%
\section{Observers for projected systems on homogeneous spaces}\label{sec:4_induced_props}

%In the previous section we have seen that the observable part of
%the system is given by the projected system on the homogeneous
%space. This induced system on the homogeneous space has a very
%simple structure: one observes the full state and the velocity of
%the trajectory. The problem of constructing an observer for a
%system on a Lie group with full state and velocity measurements
%was addressed in \cite{toappear_Lageman.TAC}. This suggests to
%extend the construction used there to the projected system on
%homogeneous spaces and lift the observer on $M$ to the group $G$
%to obtain an observer for \eqref{eq:sys_state} and
%\eqref{eq:sys_out}

In this section we consider the structure of nonlinear observers
for the projected system on the homogeneous output space.  We show
that the concepts of synchrony and canonical errors, and a
decomposition of observers into internal model and innovation
terms discussed in earlier work by the authors
\cite{2008_Lageman_mtns,toappear_Lageman.TAC} can be extended to
the case of complementary observations.   We provide a full
characterisation of the invariance properties of the innovation
term that guarantees invariance of the error dynamics, an
extension of recent work by Bonnabel \etal
\cite{2008_Bonnabel.ifac}.

Consider systems of the form
\begin{equation}\label{eq:sys:hom}
\dot{y} = T_X h_{y_0}( Xu) \text{ with } X\in h_{y_0}^{-1}(y)=\{ X\in G\;|\; h_{y_0}(X)=y\}
\end{equation}
on the homogeneous space $M$. We call such systems
\textbf{projected systems} since they arise from a projection of
the system $\dot{X}=Xu$ on $G$ as discussed in Theorem
\ref{thm:lr}.  A projected system, however, is defined on the
homogeneous space $M$ and can be independently analysed.  A simple
example is instructive.

\begin{example}\label{ex:attitude_2}
Consider Example~\ref{ex:attitude_1} of attitude estimation on
$SO(3)$ for the case of a complementary observation on the
homogeneous space $S^2$.  The measured output is modelled by $y =
R^\top y_0$ for a known `reference' $y_0 \in S^2$.  Consider an
arbitrary $R_y \in SO(3)$ such that $R_y^\top y_0 = y$ and note
that $R_y^\top y_0 = y = R^\top y_0$.  For any such $R_y$ the
projected system dynamics are given by
\[
\dot{y} = T_{R_y} h_{y_0} \left(R_y \Omega_\times \right) =
-\Omega_\times R_y^{\top} y_0  = -\Omega_\times y,
\]
%on the homogeneous output space $S^2$.  Thus, the projected system is
%\[
%\dot{y} = -\Omega_\times y
%\]
the kinematics of an inertial direction on $S^2$ expressed with
respect to the body-fixed-frame.  The projected system can be
studied independently as a kinematic system $\dot{y} =
-\Omega_\times y$ on $S^2$ without reference to the Lie-group
kinematics $\dot{R} = R \Omega_\times$ \cite{Metni_etal2006_CEP,Metni2005a}.
\end{example}

The first concept we consider for observer structure for projected
systems is that of synchrony of a plant and an observer. We use
the notation $e \colon M \times M\rightarrow N$, to denote a
smooth {\bf error function} function to a smooth manifold $N$. An
error function is a generalisation of the observer error $(\hat{x}
- x)$ that is used for systems on $\R^n$. For the moment we make
no further assumptions on the error function.

\begin{definition}
Let $M$ be a smooth manifold, $B$ a vector bundle over $M$,
$f_x\colon B\rightarrow TM$, $f_{\hat{x}}\colon B\rightarrow TM$
bundle maps and
\begin{equation}\label{eq:sync:pair}
\begin{split}
\dot{x} =& f_x(u) \\
\dot{\hat{x}} =& f_{\hat{x}}(u)
\end{split}
\end{equation}
a pair of general systems on $M$.  We call the pair of systems
\eqref{eq:sync:pair} {\bf e-synchronous} with respect to an error
function $e \colon M \times M\rightarrow N$ if for all admissible
$u\colon \R\rightarrow\R^n$, all initial values
$x_0$, $\hat{x}_0\in M$, and all $t\in \R^+$
\[
\frac{d}{dt} e(\hat{x}(t;\hat{x}_0,u),x(t;x_0,u)) =0 .
\]
\end{definition}

\begin{theorem}\label{thm:hom:err}
Consider a pair of projected systems on $M$,
\begin{align}\label{eq:sync}
\begin{split}
\dot{x} &= T_X h_{y_0}\left( Xu\right), \quad X\in h_{y_0}^{-1}(x) \\
\dot{\hat{x}} &= T_{\hat{X}} h_{y_0}\left( \hat{X}u  \right) , \quad \hat{X}\in h_{y_0}^{-1}(\hat{x}) .
\end{split}
\end{align}
If there exists a smooth error function $e\colon M\times
M\rightarrow N$ such that the systems are $e$-synchronous, then
$e$ has the form
\[
e(\hat{x},x) = g( h_{y_0}( \hat{X}X^{-1} )) \text{ with } h_{y_0}(\hat{X})=\hat{x}, h_{y_0}(X)=x
\]
and $g\colon M\rightarrow N$ a smooth function.
\end{theorem}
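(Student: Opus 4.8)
The plan is to lift the two projected systems to the group, exploit the fact that for left-invariant kinematics the right-hand error $E := \hat{X}X^{-1}$ is conserved, and then show that $e$-synchrony forces $e$ to depend on a trajectory only through $E$, and in fact only through $h_{y_0}(E)$. First I would fix initial outputs $\hat{x}_0, x_0 \in M$ together with lifts $\hat{X}_0, X_0 \in G$ (so $h_{y_0}(X_0) = x_0$ and $h_{y_0}(\hat{X}_0) = \hat{x}_0$) and solve the group kinematics $\dot{X} = Xu$, $\dot{\hat{X}} = \hat{X}u$ driven by the common input $u$. By Theorem \ref{thm:lr} the curves $x(t) = h_{y_0}(X(t))$ and $\hat{x}(t) = h_{y_0}(\hat{X}(t))$ are then the unique solutions of the projected systems \eqref{eq:sync} with the prescribed initial data. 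A one-line differentiation using $\frac{d}{dt}X^{-1} = -X^{-1}\dot{X}X^{-1} = -uX^{-1}$ shows $\dot{E} = 0$, so $E(t) \equiv E_0 := \hat{X}_0 X_0^{-1}$ and $\hat{X}(t) = E_0 X(t)$ along every trajectory.

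Next I would extract a function of $E_0$ alone. Because $G$ is connected and the kinematics are fully controllable (Remark \ref{rem:control}), any smooth curve $X(t)$ is a trajectory for the choice $u(t) = X(t)^{-1}\dot{X}(t)$, so from $X_0$ the state can be steered to an arbitrary $X_1 \in G$ while keeping $E_0$ fixed by setting $\hat{X} = E_0 X$. Evaluating the identity $\frac{d}{dt}e(\hat{x}(t), x(t)) = 0$ between the endpoints yields $e(h_{y_0}(E_0 X_0), h_{y_0}(X_0)) = e(h_{y_0}(E_0 X_1), h_{y_0}(X_1))$ for all $X_0, X_1$; hence $\bar{g}(E) := e(h_{y_0}(EX), h_{y_0}(X))$ is well defined independently of $X \in G$. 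It remains to show that $\bar{g}$ descends through the orbit map $h_{y_0}$, and here the complementary (right-invariant) character of $h$ is essential: for $S \in \stab(y_0)$ the right-action identity $h(SW, y_0) = h(W, h(S, y_0)) = h(W, y_0)$ gives $h_{y_0}(SW) = h_{y_0}(W)$, so left multiplication by the stabiliser leaves $h_{y_0}$ unchanged. Since the fibres of $h_{y_0}$ are exactly the cosets $\stab(y_0)E$, for $E_1 = SE_2$ with $S \in \stab(y_0)$ I get $\bar{g}(E_1) = e(h_{y_0}(SE_2X), h_{y_0}(X)) = e(h_{y_0}(E_2X), h_{y_0}(X)) = \bar{g}(E_2)$. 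Thus $\bar{g}$ factors as $\bar{g} = g \circ h_{y_0}$ for a unique $g\colon M \rightarrow N$, and writing $E = \hat{X}X^{-1}$ gives the claimed form $e(\hat{x}, x) = g(h_{y_0}(\hat{X}X^{-1}))$; the identity $\bar{g}(\hat{X}X^{-1}) = e(\hat{x}, x)$ holds for every choice of lifts, which simultaneously shows the right-hand side is independent of that choice.

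Finally, for smoothness I would fix $X_0$ and observe that $E \mapsto \bar{g}(E) = e(h_{y_0}(EX_0), h_{y_0}(X_0))$ is a composition of smooth maps, hence smooth; since $\stab(y_0)$ is closed, $h_{y_0}\colon G \rightarrow M$ is a surjective submersion, so the fibrewise-constant smooth map $\bar{g}$ descends to a smooth $g$ on $M$. The conceptual heart of the argument, and the step I expect to be the main obstacle, is the passage from the synchrony condition — an intrinsic statement about curves on $M$ — to a statement about the group invariant $E$: one must carefully justify lifting projected trajectories to $G$ (via Theorem \ref{thm:lr} and uniqueness of solutions) and use controllability to sweep out all of $G$, so that $\bar{g}$ genuinely depends on $E$ alone. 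The remaining handedness bookkeeping, ensuring that it is precisely the right-invariance of $h$ (complementary to the left-invariant kinematics) that makes $\bar{g}$ constant on the fibres $\stab(y_0)E$, is where the complementary-observations hypothesis enters in an essential way.
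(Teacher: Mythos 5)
Your proof is correct and follows essentially the same route as the paper: both arguments hinge on realising every smooth curve in $G$ as a trajectory via $u = X^{-1}\dot X$, so that $e$-synchrony forces $e$ to be constant along the pairs $(h_{y_0}(E_0X(t)),h_{y_0}(X(t)))$, which is exactly the diagonal $G$-invariance $e\circ(h_S\times h_S)=e$ that the paper establishes. Your final descent of $\bar g$ through the quotient $\stab(y_0)\diagdown G$ is a harmless detour the paper avoids: taking $X$ to be the identity in your definition of $\bar g$ shows the descended map is simply $g(z)=e(z,y_0)$, which is manifestly smooth and well defined without invoking the submersion property of $h_{y_0}$.
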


\begin{proof}
We first show that $e$ has to be invariant under the action of $G$
on $M\times M$, i.e. $e(h(S,\hat{x}),h(S,x))=e(\hat{x},x)$ for all
$x,\hat{x}\in M$, $S\in G$. Let $S\in G$ and $x_0,\hat{x}_0\in M$.
We choose a smooth curve $T\colon [0,1]\rightarrow G$, $T(0)=e$,
$T(1)=S$ and set $u(t)= T(t)^{-1} T(t)$.  Note that for any $x\in
M$, $t\in [0,1]$, $R\in G$ with $h(R,y_0)=x$ we have
\begin{eqnarray*}
\frac{d}{dt} h(T(t),x)  &=& \frac{d}{dt} h(T(t),h(R,y_0)) \\
&=& \frac{d}{dt} h_{y_0}(R T(t)) \\
&=& T_{R T(t)} h_{y_0} (R \dot{T}(t)) \\
&=& T_{R T(t)} h_{y_0} (R T(t) u(t)) .
\end{eqnarray*}
Hence $h(T(t),x)$ is a solution of $\dot{x} =T_X h_{y_0}( Xu),
X\in h_{y_0}^{-1}(x)$ with initial value $x$ and $u(t)=T(t)^{-1}
T(t)$. Thus $(h(T(t),x_0),h(T(t),\hat{x}_0))$ is the solution of
\eqref{eq:sync} with inital value $(x_0,\hat{x}_0)$ and $u(t)$ as
above. Since the systems are $e$-synchronous, $e$ is constant on
solutions of \eqref{eq:sync} and $e(x_0,\hat{x}_0) =
e(h(T(1),x_0),h(T(1),\hat{x}_0))$. Thus for all $S\in G$
\[
e = e\circ h_S
\]
and $e$ is invariant under the action of $G$. For $x,\hat{x}\in M$
let now $X,\hat{X}\in G$ such that
$h_{y_0}(X)=x,h_{y_0}(\hat{X})=\hat{x}$. Then
\[
e( \hat{x},x ) = e(h(\hat{X},y_0),h(X,y_0)) = e(h(X^{-1},h(\hat{X},y_0)),y_0)=e(h_{y_0}(\hat{X}X^{-1}),y_0).
\]
Thus $e$ has indeed the form as claimed in the theorem with a
smooth $g\colon M\rightarrow N$, $g(z)= e(z,y_0)$.
\end{proof}

Theorem~\ref{thm:err:hom} justifies the consideration of a {\bf
canonical error function}
\begin{equation}\label{eq:errfun}
e(\hat{x},x) = h(\hat{X}X^{-1},y_0) \text{ with } h_{y_0}(\hat{X})=\hat{x}, h_{y_0}(X)=x.
\end{equation}
The canonical error function is well-defined, smooth and
non-degenerate, in the sense that the differential of $e$ with
respect to either the first or second variables is full rank.  Note
that ``no error'' corresponds to $e(\hat{x},x)=y_0$.

\begin{remark}
The canonical error is the projection of the right-invariant error
considered in the Lie group case \cite{toappear_Lageman.TAC} onto
the homogeneous space $M$.   The right-invariant error was
associated with synchrony of left-invariant systems on the Lie
group.
\end{remark}

\begin{definition}
Two systems on a homogenous space are {\bf synchronous}, if they
are $e$-synchronous with respect to the canonical error
\eqref{eq:errfun}.
\end{definition}

\begin{theorem}\label{thm:sync:hom}
Consider the projected system \eqref{eq:sys:hom} on a homogeneous space $M$,
and a general second system on $M$ of the form
\[
\dot{\hat{y}} = f_{\hat{y}}(\hat{y},u,t) .
\]
Then the pair of systems are synchronous if and only if
\[
f_{\hat{y}}(\hat{y},u,t) = T_{\hat{X}} h_{y_0}( \hat{X}u),
\]
for all $\hat{X} \in G$ such that $h(\hat{X}, y_0) = \hat{y}$.
\end{theorem}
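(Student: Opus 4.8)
The plan is to prove the two implications separately, handling sufficiency by lifting to $G$ and necessity by a differential argument that exploits the non-degeneracy of the canonical error. First I would treat the ``if'' direction. Assume $f_{\hat{y}}(\hat{y},u,t)=T_{\hat{X}}h_{y_0}(\hat{X}u)$, so that $\hat{y}$ obeys the projected dynamics as well. Given any solution pair $(\hat{y}(t),y(t))$ of \eqref{eq:sync}, I would lift each trajectory to $G$: pick $X(t),\hat{X}(t)$ solving $\dot{X}=Xu$ and $\dot{\hat{X}}=\hat{X}u$ with $h_{y_0}(X(t))=y(t)$ and $h_{y_0}(\hat{X}(t))=\hat{y}(t)$, which exist by Theorem~\ref{thm:lr} together with uniqueness of solutions, starting from lifts of the initial conditions. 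A short computation then gives $\frac{d}{dt}(\hat{X}X^{-1}) = \dot{\hat{X}}X^{-1}-\hat{X}X^{-1}\dot{X}X^{-1} = \hat{X}uX^{-1}-\hat{X}uX^{-1}=0$, so $\hat{X}(t)X(t)^{-1}$ is constant and hence $e(\hat{y}(t),y(t))=h_{y_0}(\hat{X}(t)X(t)^{-1})$ is constant. Thus the pair is synchronous.

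For the ``only if'' direction I would differentiate the canonical error along solution pairs. Writing $D_1 e$ and $D_2 e$ for the partial differentials of $e$ with respect to its first and second arguments, synchrony says that $0=\frac{d}{dt}e(\hat{y},y) = D_1 e(\hat{y},y)\,[\,f_{\hat{y}}(\hat{y},u,t)\,] + D_2 e(\hat{y},y)\,[\,T_X h_{y_0}(Xu)\,]$ holds whenever $(\hat{y}(t),y(t))$ is a solution pair. The key observation is that any quadruple $(\hat{y},y,u,t)$ can be realised as the instantaneous data of such a pair: fixing a constant input of value $u$ and choosing initial conditions so that the pair passes through $(\hat{y},y)$ at time $t$, the identity above must hold at that instant. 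Since $(\hat{y},y,u,t)$ are arbitrary, the identity holds pointwise on $M\times M\times\g\times\R$.

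To finish I would compare with the vector field $f^{*}(\hat{y},u):=T_{\hat{X}}h_{y_0}(\hat{X}u)$, which is well defined independently of $\hat{X}$ by Theorem~\ref{thm:lr}. The computation in the sufficiency part shows that the same identity holds with $f_{\hat{y}}$ replaced by $f^{*}$, namely $D_1 e(\hat{y},y)\,[\,f^{*}(\hat{y},u)\,] + D_2 e(\hat{y},y)\,[\,T_X h_{y_0}(Xu)\,]=0$. Subtracting the two identities eliminates the common second term and yields $D_1 e(\hat{y},y)\,[\,f_{\hat{y}}(\hat{y},u,t)-f^{*}(\hat{y},u)\,]=0$ for every $y\in M$. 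Invoking the non-degeneracy of the canonical error, $D_1 e(\hat{y},y)$ is of full rank and hence injective as a map $T_{\hat{y}}M\to T_{e(\hat{y},y)}N$, so the bracketed vector must vanish. Therefore $f_{\hat{y}}(\hat{y},u,t)=T_{\hat{X}}h_{y_0}(\hat{X}u)$ for all $\hat{X}$ with $h(\hat{X},y_0)=\hat{y}$; in particular synchrony forces $f_{\hat{y}}$ to be independent of $t$, which is the asserted form.

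The step I expect to be the main obstacle is the injectivity of $D_1 e(\hat{y},y)$ used in the necessity direction: this is precisely the non-degeneracy asserted for the canonical error, and I would want to pin it down carefully (full rank of $D_1 e$, with $\dim N\ge\dim M$) so that the vanishing of $D_1 e[\,\cdot\,]$ genuinely forces its argument to vanish. Two secondary points also need care, though they should be routine: the well-definedness of the lifts and of the derivative computation, which rests on the projected vector field being well defined (Theorem~\ref{thm:lr}); and the realisation claim, i.e. that an admissible input (for instance a constant one) attaining the prescribed instantaneous value exists, so that arbitrary data $(\hat{y},y,u,t)$ arise from an actual solution pair.
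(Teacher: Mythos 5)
Your proof is correct and follows essentially the same route as the paper: sufficiency by lifting both trajectories to $G$, where the right-invariant error $\hat{X}X^{-1}$ is constant and projects to the canonical error, and necessity by differentiating the canonical error along solutions so that the $y$-dependent term cancels and everything reduces to the invertibility of $D_1 e(\hat{y},y) = T_{\hat{y}} h_{X^{-1}}$ (the differential of the diffeomorphism $h_{X^{-1}}$), which is exactly the non-degeneracy the paper also invokes. The only cosmetic difference is that the paper performs the necessity computation upstairs on $G$ via a horizontal lift of $f_{\hat{y}}$ before projecting back down, whereas you stay on $M$ throughout; the key cancellation and the final invertibility step are identical.
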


\begin{proof}
Consider firstly a pair of systems
\begin{subequations}\label{eq:proj}
\begin{align}
\dot{y} =& T_{X} h_{y_0}( Xu) \label{eq:proj_X} \\
\dot{\hat{y}} =& T_{\hat{X}} h_{y_0}( \hat{X}u). \label{eq:proj_hatX}
\end{align}
\end{subequations}
Consider the systems on $G$
\begin{subequations}\label{eq:lift}
\begin{align}
\dot{X} = & Xu, & h(X(0),y_0) = y(0) \label{eq:lift_X}  \\
\dot{\hat{X}}  =& \hat{X}u, & h(\hat{X}(0),y_0) = \hat{y}(0). \label{eq:lift_hatX}
\end{align}
\end{subequations}
It is straightforward to see that the solutions of \eqref{eq:lift}
project to the solutions of \eqref{eq:proj}.  From the Lie group
case \cite{toappear_Lageman.TAC}, it is known that the systems
\eqref{eq:lift} are synchronous with respect to the right
invariant error $E_r = \hat{X}X^{-1}$.  Hence their projected
solutions \eqref{eq:proj} must be synchronous with respect to the
projection of $E_r$, the canonical error on $M$.  It follows that
the plants are synchronous.

On the other hand, consider a pair of \emph{synchronous} systems
\begin{eqnarray*}
\dot{y} &=& T_{X} h_{y_0}( Xu) \\
\dot{\hat{y}} &=& f_{\hat{y}}(\hat{y},u,t).
\end{eqnarray*}
Consider the system \eqref{eq:lift_X} along with
\[
\dot{\hat{X}} = \left(f_{\hat{y}}(\hat{y},u,t)\right)^H,
\]
where $v^H$ denotes\footnote{A construction of a suitable
horizontal distribution is given in the proof of
Theorem~\protect\ref{thm:inv:inno}.} the horizontal lift with
respect to an arbitrary, smooth horizontal distribution to the
fibres of $M$ in $G$.  For the right invariant error on $G$, one
has
\begin{eqnarray*}
\frac{d}{dt} \hat{X}X^{-1} &=& T_{\hat{X}} R_{X^{-1}}  \left(f_{\hat{y}}(\hat{y},u,t)\right)^H
- T_{X^{-1}} L_{\hat{X}} T_e L_{X^{-1}} T_{X} R_{X^{-1}}  X u \\
&=& T_{\hat{X}} R_{X^{-1}}  \left(f_{\hat{y}}(\hat{y},u,t)\right)^H
- T_{\hat{X}} R_{X^{-1}} T_{e} L_{\hat{X}} u .\\
\end{eqnarray*}
Since these systems project down onto the original systems on $M$,
the equivariance of $h_{y_0}$ yields that
\begin{eqnarray*}
\frac{d}{dt} h_{y_0}(\hat{X}X^{-1})
&=& T_{\hat{X}X^{-1}} h_{y_0} T_{\hat{X}} R_{X^{-1}}  \left(f_{\hat{y}}(\hat{y},u,t)\right)^H
- T_{\hat{X}X^{-1}} h_{y_0} T_{\hat{X}} R_{X^{-1}} \hat{X} u \\
&=&
 T_{\hat{y}} h_{X^{-1}} T_{\hat{X}} h_{y_0}  \left(f_{\hat{y}}(\hat{y},u,t)\right)^H
-  T_{\hat{y}} h_{X^{-1}} T_{\hat{X}} h_{y_0} \hat{X} u \\
&=& T_{\hat{y}} h_{X^{-1}} f_{\hat{y}}(\hat{y},u,t)
-  T_{\hat{y}} h_{X^{-1}} T_{\hat{X}} h_{y_0} \hat{X} u .
\end{eqnarray*}
Since this result holds for all initial values $y(0)\in M$, the
synchrony condition of the systems yield
\[
f_{\hat{y}}(\hat{y},u,t) =   T_{\hat{X}} h_{y_0}(\hat{X} u).
\]
\end{proof}

From the definition of synchrony it is possible to develop and
decomposition of an observer into a synchronous internal model and
an innovation or nonlinear output injection term.

\begin{definition}\label{def:int_model_innov}
Consider a pair of systems $f_y\colon B\times\R\rightarrow TM$,
$f_{\hat{y}}\colon B\times M\times\R\rightarrow TM$,
\begin{eqnarray*}
\dot{y} &=& f_y(u,t) \\
\dot{\hat{y}} &=& f_{\hat{y}}(w,y,t)
\end{eqnarray*}
on a homogeneous space.
\begin{enumerate}
\item%%
We say that $\hat{y}$ has an internal model of $y$ if for all
admissible inputs, all $y_0\in M$ and all $t\in\R^+$
\[
\hat{y}(t,y_0,y(t,y_0,u),u) = y(t,y_0,u)
\]

\item %%
We define an innovation term to be a map $\alpha\colon M\times TM\times M \rightarrow TM$ such that
\begin{enumerate}
\item $\alpha(\hat{y},w,y,t)\in T_{\hat{y}} M$
\item $\alpha(\hat{y}(t,y_0,y(t,y_0,u),u),f_y(u,t),y(t,y_0,u),t)=0$ for all $y_0\in M$, $t\in\R$ and all admissible inputs
$u$.
\end{enumerate}
\end{enumerate}
\end{definition}

\begin{proposition}
Consider the projected system \eqref{eq:sys:hom} on $M$.
Then any observer with an internal model of this system has the form
\begin{equation}\label{eq:gobs:hom}
\dot{\hat{y}} = T_{\hat{X}} h_{y_0}( \hat{X} u ) + \alpha(\hat{y},u,y,t)
\end{equation}
where $\alpha$ is an innovation term.
\end{proposition}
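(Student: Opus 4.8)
The plan is to exhibit the claimed decomposition explicitly and then verify that the residual term is a genuine innovation. Given an arbitrary observer $\dot{\hat{y}}=f_{\hat{y}}(u,y,t)$ (with $f_{\hat{y}}(u,y,t)\in T_{\hat{y}}M$) that has an internal model of the projected system \eqref{eq:sys:hom}, I would define the residual
\[
\alpha(\hat{y},u,y,t):=f_{\hat{y}}(u,y,t)-T_{\hat{X}}h_{y_0}(\hat{X}u),\qquad \hat{X}\in h_{y_0}^{-1}(\hat{y}),
\]
so that \eqref{eq:gobs:hom} holds by construction, with the synchronous internal model term justified by Theorem~\ref{thm:sync:hom}. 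It then remains only to check that $\alpha$ satisfies the defining properties of an innovation term in Definition~\ref{def:int_model_innov}.

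First I would dispose of the two routine checks. Well-definedness of $\alpha$, i.e.\ independence of the choice of representative $\hat{X}\in h_{y_0}^{-1}(\hat{y})$, is inherited directly from the well-definedness of the synchronous term $T_{\hat{X}}h_{y_0}(\hat{X}u)$, which was established in the proof of Theorem~\ref{thm:lr} as the projected vector field of \eqref{eq:sys_state}. For property (a), both $f_{\hat{y}}(u,y,t)$ and $T_{\hat{X}}h_{y_0}(\hat{X}u)$ lie in $T_{\hat{y}}M$ --- the former because it defines $\dot{\hat{y}}$, the latter because $h_{y_0}(\hat{X})=\hat{y}$ --- so their difference $\alpha(\hat{y},u,y,t)$ lies in $T_{\hat{y}}M$, as required.

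The substantive step is property (b), the vanishing of $\alpha$ along the internal-model trajectory, and this is precisely where the internal model hypothesis enters. Along such a trajectory the observer is initialised at the plant's initial value $y_0$ and driven by the true output, so by Definition~\ref{def:int_model_innov} the internal model property gives $\hat{y}(t,y_0,y(t,y_0,u),u)=y(t,y_0,u)$ for all $t$, i.e.\ $\hat{y}(t)=y(t)$ along this trajectory. Differentiating this identity and using that $\dot{y}=T_Xh_{y_0}(Xu)$ with $X\in h_{y_0}^{-1}(y)$, I obtain $f_{\hat{y}}(u,y,t)=T_Xh_{y_0}(Xu)$ at $\hat{y}=y$. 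Since $\hat{y}=y$ forces $h_{y_0}^{-1}(\hat{y})=h_{y_0}^{-1}(y)$ and the synchronous term is independent of the chosen representative, $T_{\hat{X}}h_{y_0}(\hat{X}u)$ coincides with $T_Xh_{y_0}(Xu)$ there; subtracting shows $\alpha$ vanishes identically along the internal-model trajectory, which is exactly condition (b).

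The main difficulty I anticipate is not computational but conceptual bookkeeping: the internal model property pins down $f_{\hat{y}}$ only along the diagonal trajectories where $\hat{y}=y$, so I must take care that condition (b) is likewise imposed only along exactly these trajectories, ensuring that no stronger pointwise information about $f_{\hat{y}}$ off the diagonal is ever required. A second, minor point to reconcile is the identification of the input argument $u$ appearing in \eqref{eq:gobs:hom} with the tangent-vector argument $w=f_y(u,t)\in TM$ in the domain of $\alpha$ in Definition~\ref{def:int_model_innov}; along the plant this velocity is determined by $u$ and the current output, so the two parametrisations of the innovation agree and the decomposition is consistent.
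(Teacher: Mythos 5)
Your proposal is correct and follows essentially the same route as the paper: both define the innovation as the residual $\alpha = f - T_{\hat X}h_{y_0}(\hat X u)$ and deduce that it vanishes along the diagonal trajectories $\hat y = y$ from the internal model property. The only difference is in how that vanishing is obtained --- the paper routes through the canonical error dynamics (a calculation analogous to Theorem~\ref{thm:sync:hom}, using that the error is stationary at $y_0$ along internal-model trajectories), whereas you differentiate the identity $\hat y(t)=y(t)$ directly, which is a slightly more elementary but equivalent step.
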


\begin{proof}
Consider an observer
$$
\dot{\hat{y}} = f(\hat{y},u,y,t)
$$
with an internal model of the observed system.
Then we can define
$$
g(\hat{y},u,y,t) = f(\hat{y},u,y,t) - T_{\hat{X}} h_{y_0}( \hat{X} u )  .
$$
A calculation analogous to the proof of Theorem \ref{thm:sync:hom} shows that
$$
\frac{d}{dt} h_{y_0}(\hat{X}X^{-1})
=  T_{\hat{X}X^{-1}} h_{y_0} T_{\hat{X}} R_{X^{-1}} g(\hat{y},u,y,t).
$$
Since $\hat{y}$ has an internal model of $y$, this implies that
$$
g(y,u,y,t)=0
$$
for all $y\in M$, $t\in\R$ and admissible $u\in \g$, i.e. $g$ is an innovation.
\end{proof}

We call an innovation term $\alpha\colon M\times\g\times M\times\R \rightarrow TM$ {\bf equivariant} if for all $S\in G$,
$y,\hat{y}\in M$, $u\in\g$, $t\in\R$
$$
T_{\hat{y}} h_S \alpha(\hat{y},u,y,t) = \alpha(h_S(\hat{y}),u,h_S(y),t).
$$
Note that in this definition we consider only the specific action
$(S,\hat{y},u)\mapsto (h_S(\hat{y}),u)$ on the control bundle
$B=M\times \g$.

\begin{theorem}\label{thm:inv:inno}
The dynamics of the canonical error of an observer with an
internal model, \eqref{eq:gobs:hom}, is autonomous if and only if
the innovation term $\alpha$ is equivariant and does not depend on
$u$ or $t$. The autonomous error dynamics has the form
\[
\frac{d}{dt} e = \alpha(e,y_0).
\]
\end{theorem}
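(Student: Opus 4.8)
The plan is to reduce everything to a single explicit formula for the derivative of the canonical error along trajectories of the coupled plant/observer, and then to read off both implications from that formula by exploiting the right-action identity $h_A\circ h_B = h_{BA}$. First I would produce the error-dynamics formula. Since $\stab(y_0)$ is closed, $h_{y_0}\colon G\to M$ is a principal bundle whose fibre through $X$ is the left coset $\stab(y_0)X$, so the vertical space is $V_X = T_eR_X\,\mathfrak{s}$ with $\mathfrak{s}=T_e\stab(y_0)$; fixing any complement $\g=\mathfrak{s}\oplus\mathfrak{m}$ and setting $H_X=T_eR_X\,\mathfrak{m}$ gives a smooth horizontal distribution (this is the construction promised in the footnote). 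Lifting $\hat y$ to $\dot{\hat X}=\hat X u + (\alpha)^H$ and $y$ to $\dot X = Xu$ and repeating the computation in the proof of Theorem~\ref{thm:sync:hom} --- where the synchronous internal-model contribution drops out because the two copies of $\dot X = Xu$ are synchronous --- yields
\[
\frac{d}{dt}\,e = T_{\hat y}\, h_{X^{-1}}\,\alpha(\hat y,u,y,t),\qquad h_{y_0}(X)=y,\ h_{y_0}(\hat X)=\hat y .
\]
I would note that the right-hand side is independent of the chosen horizontal lift and of the representatives $X,\hat X$, because $h_{y_0}\circ R_{X^{-1}}$ descends to $h_{X^{-1}}\circ h_{y_0}$ and hence annihilates vertical vectors, while $e$ itself is well defined by the canonical-error construction.

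For the ``if'' direction, assume $\alpha=\alpha(\hat y,y)$ is equivariant. The right-action identities give $h_{X^{-1}}(\hat y)=h(\hat X X^{-1},y_0)=e$ and $h_{X^{-1}}(y)=h(XX^{-1},y_0)=y_0$, so substituting $S=X^{-1}$ into the equivariance relation turns the formula above into $\frac{d}{dt}e = \alpha(e,y_0)$. As $y_0$ is fixed, this depends only on $e$, so the error dynamics is autonomous and has the stated form.

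For the ``only if'' direction, suppose the error dynamics is autonomous, i.e.\ $\frac{d}{dt}e=F(e)$ for a vector field $F$ on $M$ that does not depend on $t$. At each instant one may prescribe $\hat y$ and $y$ freely as initial data and choose the input value arbitrarily, so the formula forces $T_{\hat y}h_{X^{-1}}\alpha(\hat y,u,y,t)=F(h_{X^{-1}}(\hat y))$ for all $\hat y,y,u,t$. Since $T_{\hat y}h_{X^{-1}}$ is a linear isomorphism and the right-hand side depends on neither $u$ nor $t$, I conclude that $\alpha=\alpha(\hat y,y)$. Inverting the formula then gives $\alpha(\hat y,y)=T_e h_X\,F(e)$ with $e=h_{X^{-1}}(\hat y)$ and $y=h_X(y_0)$.

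It remains to derive equivariance, which I expect to be the main obstacle, since it is pure bookkeeping with the order-reversing composition law and one must keep track of which group elements are valid fibre representatives. For $S\in G$ I would take $X'=XS$ as a representative of $h_S(y)$, check that $h_{(X')^{-1}}(h_S(\hat y))=e$ and $h_X(e)=\hat y$, and use $h_{XS}=h_S\circ h_X$ to obtain
\[
\alpha(h_S(\hat y),h_S(y)) = T_e h_{XS}\,F(e) = T_{\hat y}h_S\,T_e h_X\,F(e) = T_{\hat y}h_S\,\alpha(\hat y,y),
\]
which is precisely the equivariance condition. Together with the established independence of $\alpha$ from $u$ and $t$, this completes the characterisation.
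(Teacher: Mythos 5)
Your proposal is correct and follows essentially the same route as the paper: construct the horizontal distribution from a complement of the stabiliser subalgebra, lift both systems, derive $\tfrac{d}{dt}e = T_{\hat y}h_{X^{-1}}\alpha(\hat y,u,y,t)$, and then read off both implications from this formula using the right-action composition law. The only (cosmetic) difference is in the converse direction, where you obtain equivariance in one step by switching the fibre representative to $X'=XS$ and using $h_{XS}=h_S\circ h_X$, whereas the paper first specialises to $y=y_0$ and then extends to general base points; you also use the correct sign $+(\alpha)^H$ in the lifted observer, consistent with the paper's subsequent computation.
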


\begin{proof}
Choose a subspace $\h\subset \g$ such that $\g = \ker T_e h_{y_0}
+ \h$ is the direct sum of $\h$ and the Lie-algebra of the
stabiliser subgroup.  Define a horizontal distribution $H$ on $G$
by $H(X):= T_e R_X \h$. Let $u\colon \R\rightarrow \g$ be a fixed,
admissible input and $y(t)$, $\hat{y}(t)$ two solutions of
\eqref{eq:sys:hom} and \eqref{eq:gobs:hom} corresponding to this
input. We choose $X_0,\hat{X}_0\in G$ such that $h_{y_0}(X_0)=y(0)$
and $h_{y_0}(\hat{X}_0)=\hat{y}(0)$. Let $X(t)$ and $\hat{X}(t)$ be
curves with initial values $X_0$, $\hat{X}_0$, which satisfy the
differential equations
\begin{eqnarray*}
\dot{X} &=&   X u  \\
\dot{\hat{X}} &=& \hat{X} u  - \left(\alpha(\hat{y},u,y,t)\right)^H, \\
\end{eqnarray*}
where $(v)^H$ denotes the horizontal lift of a vector $v\in T_{h_{y_0}(X)} M$ to $T_X G$ via $H(X)$.
The curves project to $y(t)$ and $\hat{y}(t)$, i.e. $h_{y_0}(X(t))=y(t)$ and $h_{y_0}(\hat{X}(t))=\hat{y}(t)$ hold.
Let us first consider the right-invariant error $\hat{X}X^{-1}$ on the Lie group.
Then we get that
\begin{eqnarray*}
\lefteqn{\frac{d}{dt} \hat{X}X^{-1} }\\
&=& T_{\hat{X}} R_{X^{-1}}   \hat{X} u  + T_{\hat{X}} R_{X^{-1}}\left(\alpha(\hat{y},u,y,t)\right)^H
- T_{X^{-1}} L_{\hat{X}} T_e L_{X^{-1}} T_{X} R_{X^{-1}}   X u  \\
&=&  T_{\hat{X}} R_{X^{-1}}\left(\alpha(\hat{y},u,y,t)\right)^H
\end{eqnarray*}
Using the equivariance of the canonical projection we get for the canonical error
on the group that
\begin{eqnarray*}
\frac{d}{dt} e &=& T_{\hat{X}X^{-1}} h_{y_0}\left( T_{\hat{X}} R_{X^{-1}}\left(\alpha(\hat{y},u,y,t)\right)^H\right) \\
&=& T_{\hat{y}} h_{X^{-1}} T_{\hat{X}} h_{y_0}\left(\left(\alpha f(\hat{y},u,y,t)\right)^H\right) \\
&=& T_{\hat{y}} h_{X^{-1}} \alpha(\hat{y},u,y,t) .
\end{eqnarray*}
If $\alpha$ is equivariant and does not depend on $u$ or $t$, then
\[
\frac{d}{dt}e = \alpha(h(X^{-1},\hat{y}),y_0) = \alpha(e,y_0),
\]
i.e. the dynamics of $e$ is autonomous.
On the other hand, if the dynamics of $e$ is autonomous, then there is a function $F\colon M\rightarrow TM$ with
for all $e, y,\hat{y}\in M$ $X,\hat{X}\in G$ with $h_{y_0}(X)=y$, $h_{y_0}(\hat{X})=\hat{y}$, $h_{y_0}(\hat{X}X^{-1})=e$, that
$$
F(e) = T_{\hat{y}} h_{X^{-1}} \alpha(\hat{y},u,y,t) .
$$
It follows immediately that $\alpha$ does not depend on $u$ or $t$,
i.e. $\alpha(\hat{y},u,y,t)=\alpha(\hat{y},y)$.
Since the canonical error is invariant under the natural action of $G$ on $M\times M$,
one has for all $S\in G$ that
$$
T_{h_S(\hat{y})} h_{(SX)^{-1}} \alpha(h_S(\hat{y}),h_S(y)) = F(e) = T_{\hat{y}} h_{X^{-1}} \alpha(\hat{y},y).
$$
In particular, this holds for $y=y_0$.
Thus, for all $S\in G$, $\hat{y}\in M$
\[
T_{\hat{y}} h_S \alpha(\hat{y},y_0) = \alpha(h_S(\hat{y}),h_S(y_0)) .
\]
For $x, y \in M$, $R\in G$ with $y= h(Y,y_0)$, we can set
$\hat{y}=h(Y^{-1},x)$ and $S=R Y$, and obtain
$$
T_y h_R \alpha(x,y) = \alpha(h_R(x),h_R(y)) .
$$
Hence $\alpha$ is equivariant.
\end{proof}

%%%%%%%%%%%%%%%%%%%%%%%%%%%%%%%%%%%%%%%%%%%%%%%%%%%%%%%%%%%%%%%%%%%%%%%%%%%%%%%%%%%%%
\section{Observer synthesis}\label{sec:5_design}

In this section we consider the question of observer synthesis.
The approach taken is to build observers from a synchronous term
along with an gradient-like innovation derived from a cost
function on the homogeneous space.  Thus, we begin by considering
gradient-like observers for projected systems and then look at
lifting these observers up to the Lie group.

%%~~~~~~~~~~~~~~~~~~~~~~~~~~~~~~~~~~~~~~~~~~~~~~~~~~~~~~~~~~~~~~~~~~~~~~~~~~~~~~
\subsection{Observer synthesis on the projected system}\label{sec:5_design_induced}

In order to compute gradient-like terms for construction of the
observer it is necessary to define a Riemannian metric on the
homogeneous output space.  Furthermore, it is natural, and indeed
necessary for the results that follow, that the metric is
invariant with respect to the group action. Not all homogeneous
spaces necessarily admit such a metric and it is necessary to
assume this additional structure to continue with the proposed
approach.  That is, we assume that the homogeneous output space
$M$ is a \textbf{reductive homogeneous space}, or equivalently
that it admits an invariant Riemannian metric
$\langle\cdot,\cdot\rangle$ on $M$ \cite{ce}.  One has that for
all $S \in G$, $y\in M$, $u,v\in T_y M$
\[
\langle u,v\rangle =\langle T_y h_S u, T_y h_S v\rangle.
\]

Let $f\colon M \times M \rightarrow \R$ be a smooth cost function
on $M$. That is, the diagonal $\Delta=\{(y,y)\;|\;y\in M\}$ is a
closed minimal level set of the function $f$.  A smooth cost
function $f\colon M\times M\rightarrow \R$ is said to be invariant
function with respect to the group action $h$ if
\[
f(h(S,\hat{y}),h(S,y))=f(\hat{y},y)
\]
for all $y,\hat{y}\in M$, $S\in G$.

We propose the observer design
\begin{equation}\label{eq:obs:hom}
\dot{\hat{y}} = T_{\hat{X}} h_{y_0}( \hat{X} u ) - \grad_1 f(\hat{y},y)
\end{equation}
for the system \eqref{eq:sys:hom} on $M$.  Here, $\grad_1 f$
denotes the gradient of the function $\hat{y} \mapsto
f(\hat{y},y)$.  Note that since $f$ is minimal on the diagonal
$\Delta$ then $\hat{y} \mapsto f(\hat{y},y)$ has a local minima at
$\hat{y} = y$ and $\grad_1 f(y,y) = 0$. This shows that the
gradient term is an innovation according to
Definition~\ref{def:int_model_innov}.  The following lemma
shows that the innovation $\grad_1 f$ is itself equivariant if
the cost function and metric are invariant.

\begin{lemma}\label{lem:grad}
Consider a group action $h : G \times M \rightarrow M$ on a
reductive homogeneous space $M$.  Let $g$ be an invariant metric
on $M$ and $f$ be an invariant cost function with respect to $h$.
Then for all $S\in G$, $y,\hat{y}\in M$
\[
\grad_1 f( h(S,\hat{y}), h(S,y)) = T_{\hat{y}} h_S \grad_1 f(\hat{y},y).
\]
That is $\grad_1 f$ is an equivariant vector field on $M$.
\end{lemma}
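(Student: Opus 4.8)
The plan is to derive the identity from the defining variational property of the gradient together with the two invariance hypotheses. Recall that $\grad_1 f(\hat{y},y)$ is the unique vector in $T_{\hat{y}}M$ characterised by $\langle \grad_1 f(\hat{y},y), w\rangle = \dd_1 f(\hat{y},y)[w]$ for all $w\in T_{\hat{y}}M$, where $\dd_1 f$ denotes the differential of $f$ with respect to its first argument. The strategy is to show that the left- and right-hand sides of the claimed equality pair identically against every tangent vector at $h_S(\hat{y})$, and then to invoke non-degeneracy of the metric.

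First I would differentiate the invariance of $f$. Fix $S\in G$ and $y,\hat{y}\in M$. The invariance hypothesis, read as an identity of functions of the first argument, states that $f(h_S(\hat{z}),h_S(y)) = f(\hat{z},y)$ for all $\hat{z}\in M$; differentiating at $\hat{z}=\hat{y}$ in a direction $w\in T_{\hat{y}}M$ and applying the chain rule to the composition with $h_S$ gives
\[
\dd_1 f(h_S(\hat{y}),h_S(y))\,[\,T_{\hat{y}} h_S\, w\,] = \dd_1 f(\hat{y},y)[w].
\]
Rewriting both sides through the gradient characterisation yields $\langle \grad_1 f(h_S(\hat{y}),h_S(y)), T_{\hat{y}} h_S w\rangle = \langle \grad_1 f(\hat{y},y), w\rangle$. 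I would then bring in the invariance of the metric, applied at the point $\hat{y}$ to the vectors $\grad_1 f(\hat{y},y)$ and $w$, to rewrite the right-hand side as $\langle T_{\hat{y}} h_S \grad_1 f(\hat{y},y), T_{\hat{y}} h_S w\rangle$. This produces the equality $\langle \grad_1 f(h_S(\hat{y}),h_S(y)), T_{\hat{y}} h_S w\rangle = \langle T_{\hat{y}} h_S \grad_1 f(\hat{y},y), T_{\hat{y}} h_S w\rangle$ for every $w\in T_{\hat{y}}M$.

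To conclude, I would use that $h_S$ is a diffeomorphism, so $T_{\hat{y}} h_S$ is a linear isomorphism onto $T_{h_S(\hat{y})}M$; hence as $w$ ranges over $T_{\hat{y}}M$, the image $T_{\hat{y}} h_S w$ ranges over all of $T_{h_S(\hat{y})}M$. The two vectors $\grad_1 f(h_S(\hat{y}),h_S(y))$ and $T_{\hat{y}} h_S \grad_1 f(\hat{y},y)$ then have equal inner product with every vector in $T_{h_S(\hat{y})}M$, and non-degeneracy of the Riemannian metric forces them to coincide, which is exactly the claimed equivariance. The only point requiring care is the chain-rule differentiation of the two-argument invariance identity: one must differentiate in the first slot only while both slots are simultaneously translated by $h_S$, and correctly track the tangent map $T_{\hat{y}} h_S$. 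Once this bookkeeping is in place, the metric invariance and non-degeneracy close the argument immediately, so I do not expect any substantive obstacle beyond keeping the two slots straight.
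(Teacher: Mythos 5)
Your proposal is correct and follows essentially the same route as the paper's proof: differentiate the invariance identity for $f$ in the first slot to relate $\dd_1 f$ at $(\hat{y},y)$ and at $(h_S(\hat{y}),h_S(y))$, convert to inner products via the defining property of the gradient, apply the isometry property of $h_S$, and conclude by non-degeneracy of the metric. The paper merely packages the last step as the statement that $\left(T_{\hat{y}} h_S\right)^{-1}$ carries $\grad_1 f(h(S,\hat{y}),h(S,y))$ to $\grad_1 f(\hat{y},y)$, which is the same conclusion you reach.
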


\begin{proof}
By the invariance of $f$ and the Riemannian metric we have for all
$v\in T_{\hat{y}} M$
\begin{eqnarray*}
\langle \grad_1 f(\hat{y},y) , v\rangle &=& \dd_1 f(\hat{y},y)(v) \\
&=& \dd_1 f(h(S,\hat{y}),h(S,y))(T_{\hat{y}} h_S(v)) \\
&=& \langle \grad_1 f(h(S,\hat{y}),h(S,y)),T_{\hat{y}} h_S(v)\rangle \\
&=& \left\langle \left(T_{\hat{y}} h_S\right)^{-1} \grad_1 f(h(S,\hat{y}),h(S,y)), v \right\rangle,
\end{eqnarray*}
with $\dd_1$ denoting the differential with respect to the first variable.
\end{proof}

Since any equivariant innovation term yields autonomous error
dynamics (Theorem \ref{thm:inv:inno}), we obtain the following
corollary for the proposed gradient-like observer
\eqref{eq:obs:hom}.

\begin{theorem}\label{thm:err:hom}
Consider system \eqref{eq:sys:hom} on a reductive homogeneous
space equipped with an invariant metric.  Assume that there is an
invariant cost function $f$ and consider the observer
\eqref{eq:obs:hom}.   Then the canonical error $e =
h(\hat{X}X^{-1}, y_0)$ has the dynamics
\[
\dot{e} = - \grad_1 f(e,y_0).
\]
\end{theorem}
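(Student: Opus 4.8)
The plan is to recognise that Theorem~\ref{thm:err:hom} is essentially an immediate corollary of Lemma~\ref{lem:grad} and Theorem~\ref{thm:inv:inno}, so almost all of the substantive work has already been carried out. The first step is simply to observe that the proposed observer \eqref{eq:obs:hom} is precisely of the form \eqref{eq:gobs:hom} with innovation term $\alpha(\hat{y},u,y,t) = -\grad_1 f(\hat{y},y)$. As already noted in the text preceding the theorem, $\grad_1 f(y,y)=0$ because $f$ is minimal on the diagonal $\Delta$, so $\alpha$ is a genuine innovation term in the sense of Definition~\ref{def:int_model_innov}.

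The second step is to verify the two hypotheses of Theorem~\ref{thm:inv:inno}. First, $\alpha$ manifestly does not depend on $u$ or $t$, since $\grad_1 f(\hat{y},y)$ is a function of the pair $(\hat{y},y)$ alone; this is read off by inspection. Second, by Lemma~\ref{lem:grad} the vector field $\grad_1 f$ is equivariant, i.e. $T_{\hat{y}} h_S \grad_1 f(\hat{y},y) = \grad_1 f(h_S(\hat{y}),h_S(y))$ for all $S\in G$, and multiplying by $-1$ shows that $\alpha$ itself satisfies the equivariance condition. With both hypotheses verified, Theorem~\ref{thm:inv:inno} applies directly and yields that the canonical error dynamics is autonomous of the form $\frac{d}{dt}e = \alpha(e,y_0)$; substituting the innovation gives $\dot{e} = -\grad_1 f(e,y_0)$, which is exactly the claimed identity.

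I do not anticipate any genuine obstacle, since the two lemmas being combined are assumed available and the argument is a matching of hypotheses. The only point requiring even minor care is that the innovation be well-defined as a map on $M$, independent of the choice of representatives $X\in h_{y_0}^{-1}(y)$ and $\hat{X}\in h_{y_0}^{-1}(\hat{y})$; but $\grad_1 f(\hat{y},y)$ is written intrinsically in terms of points of $M$, so this is automatic, and the well-definedness of the underlying projected dynamics $T_{\hat X} h_{y_0}(\hat X u)$ was already established in Theorem~\ref{thm:lr}. The real content of the theorem thus resides entirely in Lemma~\ref{lem:grad} (equivariance of the Riemannian gradient under an invariant metric together with an invariant cost) and in Theorem~\ref{thm:inv:inno} (an equivariant, $u$- and $t$-independent innovation produces autonomous canonical error dynamics), and the present proof is merely the composition of the two.
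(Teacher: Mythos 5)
Your proposal is correct and follows exactly the paper's own argument: the paper's proof likewise invokes Lemma~\ref{lem:grad} for equivariance of $\grad_1 f$ and then applies Theorem~\ref{thm:inv:inno} to conclude the autonomous error dynamics $\dot{e} = -\grad_1 f(e,y_0)$. Your write-up is in fact more careful than the paper's two-line proof, since you also explicitly check that the gradient term is a genuine innovation and is independent of $u$ and $t$.
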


\begin{proof}
By Lemma \ref{lem:grad} the term $\grad_1 f$ is equivariant. By
Theorem \eqref{thm:inv:inno} the error is autonomous and the the
form as given above.
\end{proof}

Since the projected system is fully observed and the observer
design is based on a gradient construction, it is possible to
obtain strong almost global stability results for the observer.

\begin{corollary}\label{cor:conv:hom}
Consider system \eqref{eq:sys:hom} on a reductive homogeneous
space equipped with an invariant metric.  Assume that there is an
invariant cost function $f$ and consider the observer
\eqref{eq:obs:hom}.  Assume furthermore that $\hat{y}\mapsto
f(\hat{y},y_0)$ is a Morse-Bott function with unique global
minimum $y_0$ and no further local minima.  Then the error $e$
converges to $y_0$ for almost all initial conditions and
arbitrary, admissible inputs $u$.
\end{corollary}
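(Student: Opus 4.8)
The plan is to reduce the statement to a purely autonomous question about a single gradient flow on $M$ and then invoke the standard almost-global convergence theory for gradient systems of Morse--Bott functions. First I would apply Theorem~\ref{thm:err:hom}: under the stated hypotheses the canonical error obeys the autonomous equation $\dot e = -\grad_1 f(e,y_0)$. The decisive point is that this is \emph{independent of the input} $u$, which immediately disposes of the ``arbitrary, admissible inputs'' clause --- the whole analysis takes place on the error manifold $M$ with no reference to $u$. Writing $\phi := f(\cdot,y_0)\colon M\rightarrow\R$, the error dynamics is exactly the negative gradient flow $\dot e = -\grad\phi(e)$ of the Morse--Bott function $\phi$ with respect to the invariant metric.

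Next I would establish convergence of trajectories to the critical set. Along any solution $\frac{d}{dt}\phi(e(t)) = -\|\grad\phi(e(t))\|^2\le 0$, so $\phi$ is non-increasing and bounded below by $\phi(y_0)$; hence $\phi(e(t))$ converges and the $\omega$-limit set of each trajectory is contained in the critical set of $\phi$. Since $\phi$ is Morse--Bott, that critical set is a disjoint union of connected critical submanifolds, and the usual argument --- the $\omega$-limit set is connected, invariant, and contained in the critical set --- forces each trajectory to converge to a single critical submanifold (indeed to a single critical point, using the \L{}ojasiewicz inequality valid for Morse--Bott functions). To guarantee that the flow is complete and trajectories are bounded I would take $M$ compact, as in the motivating $S^2$ example; otherwise it suffices to assume $\phi$ proper so that its sublevel sets are compact.

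The crux is the third step: showing that the set of initial conditions that do \emph{not} converge to $y_0$ is Lebesgue-null. Because $y_0$ is the unique local minimum, every other critical submanifold $C_i$ has Morse--Bott index $\lambda_i\ge 1$, i.e.\ its normal Hessian carries at least one negative eigenvalue and $C_i$ is normally hyperbolic with a nontrivial unstable direction. By the stable manifold theorem for normally hyperbolic critical submanifolds, each local stable manifold has codimension $\lambda_i$, and saturating by the flow (a countable union of codimension-$\lambda_i$ immersed pieces) shows the global stable set $W^s(C_i)=\{x : \omega(x)\subseteq C_i\}$ has codimension $\lambda_i\ge 1$, hence measure zero. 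The non-convergent initial conditions lie in $\bigcup_{C_i\neq\{y_0\}} W^s(C_i)$, a finite union of null sets (finiteness being automatic under compactness), and are therefore null; consequently $e(t)\to y_0$ for almost every initial condition.

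I expect this third step to be the main obstacle. The first two steps are routine once Theorem~\ref{thm:err:hom} has linearised the problem into an autonomous gradient flow, but the almost-global claim genuinely requires the normal hyperbolicity supplied by the Morse--Bott hypothesis (so that the transverse stable manifold theorem applies at each $C_i$), the flow-saturation needed to pass from local to global stable sets, and finiteness of the critical submanifolds. If $M$ were non-compact these are exactly the places where an additional properness assumption would have to be inserted.
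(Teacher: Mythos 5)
Your proposal is correct and follows exactly the route the paper intends: the paper in fact supplies no written proof of this corollary, asserting it as an immediate consequence of Theorem~\ref{thm:err:hom} together with the standard almost-global convergence theory for negative gradient flows of Morse--Bott functions, which is precisely the argument you have written out (autonomy of the error, descent of $f(\cdot,y_0)$ along trajectories, and measure-zero stable sets of the non-minimal critical submanifolds). Your observation that compactness of $M$, or properness of $f(\cdot,y_0)$, is needed for completeness of the flow and finiteness of the critical submanifolds flags a hypothesis the paper leaves implicit; it is harmless in the paper's motivating example on $S^2$ but is a genuine refinement of the statement as written.
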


It is relevant to note that although the innovation term is
equivariant the observer itself is not.  This is due to the fact
that the synchronous term is not equivariant with respect to
direct to direct application of the group action.  This situation
is analogous to the case observed for Lie group systems with full
measurements \cite{toappear_Lageman.TAC}.

Since the system state of the projected system is fully observed
then the choice of a gradient innovation term based on a function
of the full state on $M$ is very natural.  The most significant
difficulty in the proposed observer design is the requirement to
find a suitable cost function $f$. The two requirements for a cost
function candidate are the invariance property (used in the
structure of the observer design) and the Morse-Bott requirement
(used in the convergence proof Corollary~\ref{cor:conv:hom}). The
easiest approach to generating cost functions is to start by
searching for a Morse-Bott function candidate without demanding
the invariance property.  That is one looks to construct a
function $\hat{f} : M \rightarrow \R$ such that $\hat{f}(y)$ is
Morse-Bott and has an isolated global minima at $y_0$. Such a
function is always locally available by considering a least
squares cost with respect to local coordinates on the homogeneous
output space $M$.
In practice, local coordinates generally yield a
poor choice and the best Morse-Bott function candidates are
obtained by studying the global geometric structure of the
homogeneous space case by case. Since such homogeneous spaces are
generated as symmetry spaces of Lie groups there is often
significant structure that can be exploited in the construction of
global Morse-Bott candidate functions.  Once such a function
$\hat{f}$ is found it can be used to generate an invariant cost
function by setting
\[
f(y_1, y_2) = f(h(X,y_0),h(\hat{X},y_0)) = \hat{f}(h(\hat{X}^{-1}X,y_0)).
\]
for any $X$ and $\hat{X}$ such that $y_1 = h(X, y_0)$ and $y_2 =
h(\hat{X}, y_0)$.  It is straightforward to verify that $f$ is
well defined and invariant.  Moreover, if $\hat{f}$ has an unique
global minimum at $y_0$ and no further local minima, the function
$f$ will satisfy the conditions of Corollary \ref{cor:conv:hom}.
The construction is analogous to the development given in
\cite{toappear_Lageman.TAC}.

%%%%%%%%%%%%%%%%%%%%%%%%%%%%%%%%%%%%%%%%%%%%%%%%%%%%%%%%%%%%%%%%%%%%%%%%%%%%%%%%%%%%%%%
\subsection{Observers for the full system}\label{sec:5_design_full}

Section \ref{sec:5_design_induced} provided a constructive process
for the design of observers for projected systems.  Since the
projected system has a fully observed state, the gradient based
innovation is one of the most natural approaches to observer
design.  Since the projected system is a minimal observable
realisation of the \eqref{eq:sys_state} and \eqref{eq:sys_out} on
the Lie group, it is not to be expected that an observer
constructed separately on the Lie group could yield more
information about the state than that obtained from the projected
system.  Thus, the most `natural' observer on the Lie group is the
`lift' of the observer on the projected system onto the Lie group.
However, only the innovation is equivariant for the project system
observer and the main system cannot be unique lifted.  The
proposed solution is to lift the innovation and use the natural
synchronous internal model on the Lie group to define an observer.

For any reductive homogeneous space there is an invariant
horizontal distribution $H(X)$ and an invariant Riemannian metric
on $G$ such that the projection of the metric from $H(X)$ onto $T
M$ induces the invariant Riemannian metric on $M$.  Details of the
construction for the case of left invariant actions is provided in
Cheeger \etal \cite{ce}.  The case of right group actions and
right invariant metrics is entirely analogous.  The horizontal
distributions arising in this construction have the form discussed
in the proof of Theorem \ref{thm:inv:inno}, but the subspace $\h$
has to be chosen to satisfy some additional constraints that ensure the
metric construction is well defined.  Given the horizontal
distribution $H(X)$ there is a unique linear map, termed the lift,
\[
(\cdot)^{H(X)} : T_y M \rightarrow H(X)
\]
such that $T_X h_{y_0}((v)^{H(X)}) = v$.  That is $(v)^{H(X)} \in
H(X)$ denotes the lift of $v \in T_y M$.  Where the point $X \in
G$ is clear from context, or any choice is equivalent, we will
write $(v)^H = (v)^{H(X)}$.

The proposed observer for the system \eqref{eq:sys_state} and
\eqref{eq:sys_out} is given by
\begin{equation}\label{eq:obs}
\dot{\hat{X}} = \hat{X} u - \left( \grad_1 f(h_{y_0}(\hat{X}),y)\right)^H
\end{equation}
on the group.  The following proposition follows directly from its
construction.

\begin{proposition}\label{prop:proj_obs}
The observer \eqref{eq:obs} projects to the observer
\eqref{eq:obs:hom} on the homogeneous space.
\end{proposition}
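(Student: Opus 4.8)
The plan is to verify the projection claim directly by a pointwise computation, showing that if $\hat{X}(t)$ solves the group observer \eqref{eq:obs} then its image $\hat{y}(t) := h_{y_0}(\hat{X}(t))$ solves the homogeneous-space observer \eqref{eq:obs:hom}. First I would differentiate $\hat{y} = h_{y_0}(\hat{X})$ along the flow using the chain rule, obtaining $\dot{\hat{y}} = T_{\hat{X}} h_{y_0}(\dot{\hat{X}})$, and then substitute the right-hand side of \eqref{eq:obs} for $\dot{\hat{X}}$.

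Next, I would exploit the linearity of the tangent map $T_{\hat{X}} h_{y_0}$ to split the resulting expression into the internal-model contribution $T_{\hat{X}} h_{y_0}(\hat{X} u)$ and the innovation contribution $- T_{\hat{X}} h_{y_0}\bigl( (\grad_1 f(h_{y_0}(\hat{X}),y))^H \bigr)$. The first term is already exactly the synchronous internal model appearing in \eqref{eq:obs:hom}, so nothing further is required there.

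The crux of the argument is the second term. Here I would invoke the defining property of the horizontal lift, namely $T_X h_{y_0}((v)^{H(X)}) = v$ for every $v \in T_y M$, which holds by the very construction of $(\cdot)^{H(X)}$ as a right inverse of $T_X h_{y_0}$. Applying this identity with $v = \grad_1 f(h_{y_0}(\hat{X}),y) = \grad_1 f(\hat{y}, y)$ collapses the innovation contribution to $-\grad_1 f(\hat{y},y)$. Combining the two terms then reproduces the right-hand side of \eqref{eq:obs:hom} verbatim, which is the desired conclusion.

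I do not expect a genuine obstacle in this argument: each step is a one-line application of linearity or of the lift identity, which is precisely why the statement is asserted to follow directly from the construction. The only point requiring a little care is to confirm that the computation is purely pointwise in $\hat{X}$ and does not secretly depend on the choice of horizontal distribution beyond the lift identity; since $(\cdot)^{H(X)}$ is by definition a right inverse of $T_X h_{y_0}$, the projected innovation is independent of that choice, and the argument goes through uniformly for any admissible input $u$.
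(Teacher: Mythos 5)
Your computation is correct and is exactly the argument the paper has in mind: the paper offers no written proof, stating only that the proposition ``follows directly from its construction,'' and the intended justification is precisely your chain-rule calculation combined with the defining identity $T_{\hat{X}} h_{y_0}((v)^{H(\hat{X})}) = v$ of the horizontal lift. Nothing is missing.
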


Only the innovation term is a horizontal lift in \eqref{eq:obs}.
The full system is not a horizontal lift since $\hat{X}u \not\in
H(\hat{X})$ need not lie in the horizontal distribution and consequently
\[
\hat{X}u \not= \left( T_{\hat{X}} h_{y_0}( \hat{X} u ) \right)^H.
\]
The proposed observer, however, can be thought of as a more
general lift of projected system onto $G$ as shown in Proposition
\ref{prop:proj_obs}.  The convergence properties of the observer
\eqref{eq:obs} follow directly from the results on the homogeneous
space.
 
\begin{corollary}\label{cor:conv:grp}
Consider the left invariant system \eqref{eq:sys_state} with
complementary outputs \eqref{eq:sys_out} on a reductive
homogeneous output space. Let $f$ be and invariant cost function
such that $\hat{y}\mapsto f(\hat{y},y_0)$ is a Morse-Bott function
with unique global minimum $y_0$ and no further local minima.  Let
$E_r : G \times G \rightarrow G$, $E_r(\hat{X}, X) =
\hat{X}X^{-1}$ be the right invariant error on $G$
\cite{toappear_Lageman.TAC}.

Then for generic initial conditions $\hat{X}(0)$ and arbitrary,
admissible inputs $u$ the error $E_r$ converges asymptotically to $\stab(y_0)$.
\end{corollary}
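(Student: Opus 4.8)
The plan is to transport the almost-global convergence already established on the homogeneous space $M$ up to the group $G$, exploiting that the canonical error on $G$ is precisely the projection under $h_{y_0}$ of the right invariant error $E_r$. First I would set up the relevant quantities: by Proposition~\ref{prop:proj_obs} the lifted observer \eqref{eq:obs} projects to the observer \eqref{eq:obs:hom} on $M$, so $\hat{y}(t) = h_{y_0}(\hat{X}(t))$ is a solution of \eqref{eq:obs:hom}, and the canonical error is $e(t) = h(\hat{X}(t)X(t)^{-1},y_0) = h_{y_0}(E_r(t))$. By Theorem~\ref{thm:err:hom} this error obeys the autonomous gradient flow $\dot{e} = -\grad_1 f(e,y_0)$, and by Corollary~\ref{cor:conv:hom}, under the stated Morse--Bott hypothesis, $e(t) \to y_0$ for every initial condition outside an exceptional set $\mathcal{N}\subset M$ of measure zero (the union of the stable manifolds of the non-minimal critical points of $\hat{y}\mapsto f(\hat{y},y_0)$).

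Next I would reconcile ``generic initial conditions $\hat{X}(0)$'' with ``generic $e(0)$''. The plant initial value $X(0)$ is fixed but arbitrary, and the assignment $\hat{X}(0)\mapsto e(0) = h_{y_0}(\hat{X}(0)X(0)^{-1})$ is the composition of the right translation $R_{X(0)^{-1}}$, a diffeomorphism of $G$, with the submersion $h_{y_0}\colon G\rightarrow M$, hence is itself a submersion. Since the preimage of a null set under a submersion is again a null set, the set of ``bad'' observer initial conditions $\{\hat{X}(0)\;|\;e(0)\in\mathcal{N}\}$ is negligible in $G$. Therefore for generic $\hat{X}(0)$ we have $e(t)=h_{y_0}(E_r(t))\to y_0$.

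The remaining and, I expect, main step is to upgrade convergence of $e(t)=h_{y_0}(E_r(t))$ to $y_0$ on $M$ into convergence of $E_r(t)$ to the fibre $h_{y_0}^{-1}(y_0)=\stab(y_0)$ in $G$. Here I would use that the invariant metric on $G$ is constructed (as recalled before \eqref{eq:obs}, following Cheeger \etal \cite{ce}) exactly so that $h_{y_0}$ is a Riemannian submersion onto $(M,\langle\cdot,\cdot\rangle)$; note also that $M$, being a homogeneous Riemannian manifold, is complete, so the injectivity radius at $y_0$ is positive. Once $e(t)$ lies within this injectivity radius, I would lift the unique minimizing geodesic from $e(t)$ to $y_0$ horizontally, starting at $E_r(t)$. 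Because horizontal lifts of geodesics under a Riemannian submersion are geodesics of equal length, the lifted curve ends in the fibre $h_{y_0}^{-1}(y_0)=\stab(y_0)$ and has length $\dist(e(t),y_0)$, giving $\dist(E_r(t),\stab(y_0)) \le \dist(e(t),y_0)$. Letting $t\to\infty$ then yields $\dist(E_r(t),\stab(y_0))\to 0$, i.e. $E_r$ converges asymptotically to $\stab(y_0)$.

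The delicate points I expect to wrestle with are entirely in this last step: guaranteeing that the horizontal lift of the minimizing geodesic actually terminates in the fibre (which needs $e(t)$ close enough to $y_0$, but this is ensured eventually since $e(t)\to y_0$), and confirming that the submersion estimate $\dist(E_r(t),\stab(y_0))\le\dist(e(t),y_0)$ is strong enough to force convergence to $\stab(y_0)$ even when the stabiliser fails to be compact. All earlier steps are essentially bookkeeping built on Proposition~\ref{prop:proj_obs}, Theorem~\ref{thm:err:hom} and Corollary~\ref{cor:conv:hom}; it is only this passage from the quotient to the total space that requires the Riemannian submersion structure of the lifted metric.
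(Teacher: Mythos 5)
Your proof is correct and takes the same route as the paper, whose entire argument is the single sentence that the claim is ``a straightforward consequence of Corollary~\ref{cor:conv:hom} and the fact that the canonical error on $M$ is the projection of $E_r$.'' The two details you supply beyond that --- transferring genericity of $e(0)$ back to genericity of $\hat{X}(0)$ via the submersion $\hat{X}(0)\mapsto h_{y_0}(\hat{X}(0)X(0)^{-1})$, and the Riemannian-submersion estimate $\dist(E_r(t),\stab(y_0))\le\dist(e(t),y_0)$ obtained by horizontally lifting (near-)minimizing paths --- are exactly the steps the paper leaves implicit, and both are handled correctly (indeed the last inequality already settles your worry about non-compact stabilisers, since it bounds the distance to the set directly).
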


\begin{proof}
This result is a straightforward consequence of Corollary
\ref{cor:conv:hom} and the fact that the canonical error on $M$ is
the projection of $E_r$.
\end{proof}

The observer \eqref{eq:obs} can also be derived directly on the
Lie group.  For this purpose we define the lift $\tilde{f}$ of the
cost function $f$ as the map $G \times G \rightarrow \R$,
\begin{equation}\label{eq:f_tilde}
\tilde{f}(\hat{X},X)=f(h_{y_0}(\hat{X}),h_{y_0}(X)).
\end{equation}

\begin{proposition}\label{prop:lift:cost}
Consider a right group action on a reductive homogeneous space.
Let $f : M \times M \rightarrow \R$ be an invariant cost function
on $M$ and let $\tilde{f}$ be given by \eqref{eq:f_tilde}.  The
gradient of the cost function $f$ and the lifted function
$\tilde{f}$ are related by
\[
\grad_1 \tilde{f}(\hat{X},X) = \left(\grad_1 f(h_{y_0}(\hat{X}),h_{y_0}(X))\right)^H,
\]
If $f$ is invariant under the action of $G$ on $M$, then the
lifted function $\tilde{f}$ is a right invariant cost function on
$G$.
\end{proposition}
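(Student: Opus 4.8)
The plan is to prove the two assertions separately, treating the gradient identity as the substantive part and the right invariance as a short equivariance calculation. Throughout I work with the invariant metric on $G$ and the horizontal distribution $H(X)$ furnished by the construction referenced from Cheeger \etal \cite{ce}, whose defining feature is that $h_{y_0}$ becomes a Riemannian submersion: the horizontal space is the orthogonal complement of the vertical space $\ker T_X h_{y_0}$, and $T_X h_{y_0}$ restricts to a linear isometry of $H(X)$ onto $T_{h_{y_0}(X)} M$.

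For the gradient identity I would first show that $\grad_1 \tilde{f}(\hat{X},X)$ is horizontal. Since $\tilde{f}(\hat{X},X)=f(h_{y_0}(\hat{X}),h_{y_0}(X))$ depends on $\hat{X}$ only through $h_{y_0}(\hat{X})$, the chain rule gives $\dd_1 \tilde{f}(\hat{X},X)(w)=\dd_1 f(h_{y_0}(\hat{X}),h_{y_0}(X))(T_{\hat{X}} h_{y_0} w)$ for all $w\in T_{\hat{X}} G$. In particular $\dd_1 \tilde{f}$ annihilates every vertical $w$, so by the defining property of the gradient $\grad_1 \tilde{f}$ is orthogonal to the vertical space, hence lies in $H(\hat{X})$. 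I would then identify its image under $T_{\hat{X}} h_{y_0}$: for arbitrary $v\in T_{\hat{y}}M$ with $\hat{y}=h_{y_0}(\hat{X})$, test against the horizontal lift $(v)^H$. Since both $\grad_1 \tilde{f}$ and $(v)^H$ are horizontal, the isometry property of $T_{\hat{X}}h_{y_0}$ on $H(\hat{X})$ together with the chain rule yields $\langle T_{\hat{X}}h_{y_0}(\grad_1 \tilde{f}),v\rangle=\langle \grad_1 \tilde{f},(v)^H\rangle=\dd_1 \tilde{f}(\hat{X},X)((v)^H)=\dd_1 f(\hat{y},h_{y_0}(X))(v)=\langle \grad_1 f(\hat{y},h_{y_0}(X)),v\rangle$. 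As this holds for every $v$, we get $T_{\hat{X}}h_{y_0}(\grad_1 \tilde{f})=\grad_1 f$; because $\grad_1 \tilde{f}$ is horizontal and the horizontal lift is the unique horizontal preimage under $T_{\hat{X}}h_{y_0}$, I conclude $\grad_1 \tilde{f}(\hat{X},X)=\bigl(\grad_1 f(h_{y_0}(\hat{X}),h_{y_0}(X))\bigr)^H$.

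For the right invariance I would use only that $h$ is a right action together with the invariance of $f$. Because $h$ is a right action, $h_{y_0}(\hat{X}S)=h(\hat{X}S,y_0)=h(S,h(\hat{X},y_0))=h_S(h_{y_0}(\hat{X}))$, and likewise $h_{y_0}(XS)=h_S(h_{y_0}(X))$. Substituting into the definition of $\tilde{f}$ and invoking $f(h_S\hat{y},h_S y)=f(\hat{y},y)$ gives $\tilde{f}(\hat{X}S,XS)=f(h_S(h_{y_0}(\hat{X})),h_S(h_{y_0}(X)))=f(h_{y_0}(\hat{X}),h_{y_0}(X))=\tilde{f}(\hat{X},X)$, i.e.\ invariance under the simultaneous right translation $(\hat{X},X)\mapsto(\hat{X}S,XS)$ that fixes $E_r=\hat{X}X^{-1}$.

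I expect the horizontality step to be the main obstacle, as it rests on the precise form of the metric construction, namely that the invariant metric on $G$ renders $h_{y_0}$ a Riemannian submersion with horizontal space orthogonal to the fibres; granting that construction, the remaining steps are routine. One subtlety worth flagging in a remark is that the minimal level set of $\tilde{f}$ is the coset relation $\{(\hat{X},X)\mid \hat{X}X^{-1}\in\stab(y_0)\}$ rather than the strict diagonal of $G$, reflecting the unobservable subgroup; this is the appropriate notion of minimal set for a right invariant cost on $G$ in the present setting.
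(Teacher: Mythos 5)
Your proposal is correct and follows essentially the same route as the paper's proof: the chain rule shows $\dd_1\tilde{f}$ annihilates the vertical space so $\grad_1\tilde{f}$ is horizontal, the isometry of $T_{\hat{X}}h_{y_0}$ on $H(\hat{X})$ identifies its image as $\grad_1 f$, and the right-action property of $h$ plus invariance of $f$ gives invariance of $\tilde{f}$ under $(\hat{X},X)\mapsto(\hat{X}Z,XZ)$. Your closing remark that the minimal level set of $\tilde{f}$ is the coset relation determined by $\stab(y_0)$ rather than the strict diagonal is a correct observation the paper leaves implicit, but it does not change the argument.
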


\begin{proof}
For the first statement, note that for all $v\in T_{\hat{X}} G$
\[
\dd_1 \tilde{f}(\hat{X},X)(v) = \dd_1 f(\hat{X}y_0,\hat{X}y_0)
(T_{\hat{X}}h_{y_0} v).
\]
Hence for all $v\in T_{\hat{X}} G$
\[
\langle \grad_1 \tilde{f}(\hat{X},X),v\rangle = \langle \grad_1 f(\hat{X}y_0,\hat{X}y_0) ,T_{\hat{X}}h_{y_0} v \rangle.
\]
In particular,
\[
\langle \grad_1 \tilde{f}(\hat{X},X),v\rangle = 0
\]
for $v\in \ker T_{\hat{X}}h_{y_0}$.
Hence, $\grad_1 \tilde{f}(\hat{X},X)\in H(\hat{X})$.
On the other hand we have by our conditions on the Riemannian metrics that
for all $v \in H(\hat{X})$
\[
\langle \grad_1 \tilde{f}(\hat{X},X),v\rangle
=
\langle T_{\hat{X}}h_{y_0} \grad_1 \tilde{f}(\hat{X},X),T_{\hat{X}}h_{y_0} v\rangle .
\]
Hence $\grad_1 \tilde{f}$ is the horizontal lift of $\grad_1 f$.
If $f$ is invariant, then for all $X,\hat{X},Z\in G$ we have
\begin{multline*}
\tilde{f}(\hat{X}Z,XZ)=f(h_{y_0}(\hat{X}Z),h_{y_0}(XZ))
=f(h(Z,h_{y_0}(\hat{X})),h(Z,h_{y_0}(X))) \\
=f(h_{y_0}(\hat{X}),h_{y_0}(X))=\tilde{f}(\hat{X},X),
\end{multline*}
i.e. the lift is invariant too.
\end{proof}

The lifted cost function is then a candidate for the observer
construction for systems on a Lie group proposed in earlier work
\cite{toappear_Lageman.TAC}. This construction yields
\begin{equation}\label{eq:lift:obs}
\dot{\hat{X}} = \hat{X} u - \grad_1 \tilde{f}(\hat{X},X).
\end{equation}
This construction is made based on the understanding that the full
state $X$ is measured, however, in this specific case the state
$X$ is only used in the innovation term in a manner that can be
derived from the information contained in the measurement $y =
h(X, y_0)$.  The easiest way to see this is to note that by
Proposition \ref{prop:lift:cost} the observer \eqref{eq:lift:obs}
is the same observer as \eqref{eq:obs}.  As a consequence we
get the equivalence of the observer error trajectories.

\begin{corollary}
Consider the left invariant system \eqref{eq:sys_state} with
complementary outputs \eqref{eq:sys_out} on a reductive
homogeneous output space $M$. Let $f$ be and invariant cost
function such that $\hat{y}\mapsto f(\hat{y},y_0)$ is a Morse-Bott
function with unique global minimum $y_0$ and no further local
minima.  Let $E_r = \hat{X}X^{-1}$ be the right invariant error on
$G$ and $e = h(E_r,y_0)$ be the induced canonical error on $M$.
Then if $h(\hat{X}(0),y_0) = \hat{y}(0)$, the error trajectory
$E_r(t)$ generated by \eqref{eq:obs} with initial conditions
$\hat{X}(0)$ on $G$ projects to the error trajectory $e(t)$
generated by \ref{eq:obs:hom} with initial conditions
$\hat{y}(0)$.  That is
\[
h(E_r(t), y_0) = e(t).
\]
\end{corollary}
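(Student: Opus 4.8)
The plan is to read the claim off directly from Proposition~\ref{prop:proj_obs} together with the representative-independent definition of the canonical error \eqref{eq:errfun}; no new estimate is required, and all the work lies in matching up trajectories and invoking uniqueness of solutions.

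First I would fix the data on $G$. Let $X(t)$ solve the plant $\dot{X}=Xu$, so that $y(t)=h_{y_0}(X(t))$ is, by Theorem~\ref{thm:lr}, the plant output on $M$, and let $\hat{X}(t)$ solve the lifted observer \eqref{eq:obs} with initial value $\hat{X}(0)$. By Proposition~\ref{prop:proj_obs} the curve $t\mapsto h_{y_0}(\hat{X}(t))$ is a solution of the projected observer \eqref{eq:obs:hom}. Since by hypothesis $h_{y_0}(\hat{X}(0))=\hat{y}(0)$, and since \eqref{eq:obs:hom} has unique solutions for admissible inputs, this projected curve coincides with the $M$-observer trajectory $\hat{y}(t)$ for all $t$, i.e. $h_{y_0}(\hat{X}(t))=\hat{y}(t)$.

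Next I would invoke the definition \eqref{eq:errfun}. The error $e(t)$ generated by \eqref{eq:obs:hom} is by construction the canonical error $e(\hat{y}(t),y(t))$, and this value is independent of the chosen group representatives of $\hat{y}(t)$ and $y(t)$ (the canonical error is well-defined, as noted after \eqref{eq:errfun}). Evaluating it on the representatives $\hat{X}(t),X(t)$ produced above — which are legitimate precisely because of the projection-plus-uniqueness step — gives $e(t)=h(\hat{X}(t)X(t)^{-1},y_0)=h(E_r(t),y_0)$, which is the assertion.

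The only point needing care, and hence the ``main obstacle'', is the middle step: confirming that the horizontally lifted group observer \eqref{eq:obs} genuinely projects onto the $M$-observer \eqref{eq:obs:hom}, so that $\hat{X}(t)$ is a valid representative of $\hat{y}(t)$ at each time despite $\hat{X}u$ itself not being horizontal. This is exactly the content of Proposition~\ref{prop:proj_obs} combined with uniqueness. As an independent cross-check one can argue purely via dynamics: Theorem~\ref{thm:err:hom} shows that $h(E_r(t),y_0)$ obeys $\dot{e}=-\grad_1 f(e,y_0)$, which is also the dynamics of $e(t)$ by the same theorem; equality of the initial errors together with uniqueness then forces $h(E_r(t),y_0)=e(t)$, confirming the result.
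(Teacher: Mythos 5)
Your proposal is correct and follows essentially the same route the paper intends: the corollary is stated as an immediate consequence of Proposition~\ref{prop:proj_obs} (the lifted observer \eqref{eq:obs} projects to \eqref{eq:obs:hom}), and your argument simply makes explicit the two ingredients the paper leaves implicit, namely uniqueness of solutions of the projected observer to identify $h_{y_0}(\hat{X}(t))=\hat{y}(t)$, and the representative-independence of the canonical error \eqref{eq:errfun} to conclude $h(E_r(t),y_0)=e(\hat{y}(t),y(t))=e(t)$. The dynamical cross-check via Theorem~\ref{thm:err:hom} is a valid alternative but adds nothing beyond the direct argument.
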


%%%%%%%%%%%%%%%%%%%%%%%%%%%%%%%%%%%%%%%%%%%%%%%%%%%%%%%%%%%%%%%%%%%%%%%
\section{Example: Attitude estimation}\label{sec:6_example}

In this section, we complete the analysis of the simple example
introduced in Examples~\ref{ex:attitude_1} and \ref{ex:attitude_2}
to demonstrate how the developments in the paper can be applied in
practice.  We consider the complementary output case discussed in
Example~\ref{ex:attitude_1}, that of estimating the attitude of a
rigid body based on measurements of a single vectorial direction
measured in the body-fixed-frame along with full measurement of
angular velocity.  There is an extensive literature concerning the
estimation of attitude of rigid-bodies based on body-fixed frame
measurements
\cite{Bonnabel2006_acc,2008_Bonnabel.TAC,2008_Bonnabel.ifac,Bonnabel2005_ch,2007_Crassidis.JGCD,2006_HamMah_ICRA,2008_Lageman_mtns,toappear_Lageman.TAC,2005_MahHamPfl-C64,2008_MahHamPfl.TAC,Maithripala2004,2004_Maithripala_acc,2008_Martin.ifac,2007_Martin_cdc,Metni_etal2006_CEP,Metni2005a,Sal1991_TAC,2007_Tayebi_cdc,TayMcG_2006_CST,ThiSan2003_TAC,2008_Vasconcelos.ifac,VikFos_2001_CDC}.
We mention in particular, the recent papers
\cite{2006_HamMah_ICRA,2008_MahHamPfl.TAC} that introduced the
``explicit complementary filter'' and the earlier work
\cite{Metni2005a,Metni_etal2006_CEP} that developed observers for
an inertial direction.   Both these observers are obtained as
specialisations of the general techniques proposed in this paper
for a natural choice of cost function.  We mention also the
independent work \cite{2008_Bonnabel.TAC,2008_Bonnabel.ifac} that
obtained the same observer as \cite{2008_MahHamPfl.TAC}.

The system is posed on $SO(3)$ (see Example~\ref{ex:attitude_1})
\begin{subequations}\label{eq:ex2}
\begin{align}
\dot{R} = & R\Omega_\times \label{eq:ex2_dotR}\\
y = & R^\top y_0 \label{eq:ex2_y}
\end{align}
\end{subequations}
with $y_0\in \frameA$ the `reference' direction in the inertial
frame.  The complementary output $y \in \frameB$ is the
observed orientation of the vectorial direction measured in the
body-fixed-frame.  The measurement equation \eqref{eq:ex2_y}
models the coordinate transformation induced by the relative
orientation of $\frameB$ with respect to $\frameA$.

The homogeneous output space is the unit sphere $S^2$.  By
Theorem~\ref{thm:lr}, the system \eqref{eq:ex2} projects to a
system on $S^2$.
Recalling the derivation in Example~\ref{ex:attitude_2} the
projected system can be written
\begin{equation}\label{eq:ex2:sp}
\dot{y} = -\Omega_\times y, \quad y(0) = h(X(0), y_0)
\end{equation}
on $S^2$.  We continue by designing an observer for system
\eqref{eq:ex2:sp} as discussed in Section \ref{sec:5_design}.

Consider the cost function $f(\hat{y},y)=(k/2)\|\hat{y}-y\|^2$ for
$k > 0$ a positive constant. It is straightforward to verify that
$f$ satisfies the conditions of Corollary \ref{cor:conv:hom}. That
is, $f\colon S^2\times S^2\rightarrow\R$ is invariant under the
right action of $SO(3)$ on $S^2\times S^2$, and $f$ is Morse-Bott
in the first argument with a unique global minima at $\hat{y} =
y$.  The constant $k$ is introduced to provide a tunable gain in
the observer design: adjusting $k$ scales the eigenvalues of the
Hessian of $f$ and tunes the exponential rate of the convergence
of the error dynamics.  Note that
\[
f(\hat{y},y)=\frac{k}{2}\|\hat{y}-y\|^2 = k(1 - \langle \hat{y}, y\rangle).
\]
The term  $(1 - \langle \hat{y}, y\rangle) = 1 - \cos (\angle
(\hat{y}, y)) $ is locally quadratic in the angle $\angle(\hat{y},
y))$.  The references
\cite{Metni2005a,Metni_etal2006_CEP,2006_HamMah_ICRA,2008_MahHamPfl.TAC}
used $(1 - \langle \hat{y}, y\rangle)$ directly as a Lyapunov
function.  This cost is also closely related to the quaternion
cost used in earlier works \cite{VikFos_2001_CDC,ThiSan2003_TAC}
as discussed in the appendix of \cite{2008_MahHamPfl.TAC}.

The Riemannian metric on $S^2$ induced by the direct embedding of
$S^2$ in $\R^3$ is invariant under the right action $(R, x) \mapsto
R^\top x$ of $SO(3)$ on $\R^3$.  In particular, $(R^\top v)^\top
R^\top w = v^\top R R^\top w = v^\top w$ for $v, w \in T_y S^2 \subset \R^3$.
The gradient of $f$ is computed from the relationships $\langle
\grad_1 f(\hat{y},y), w \rangle = \dd_1 f(\hat{y},y)[w]$ and
$\grad_1 f(\hat{y},y) \in T_{\hat{y}} S^2$. 
Here $\dd_1 f(\hat{y},y)$ denotes the differential of $f$ with respect to the first variable, i.e. $\hat{y}$.
One has
\[
\dd_1 f(\hat{y},y)[w] = w^\top (\hat{y} - y) = k w^\top \underbrace{(I - \hat{y} \hat{y}^\top )(\hat{y} - y)}_{\in T_{\hat{y}} S^2}
\]
since $w^\top  = w^\top (I - \hat{y} \hat{y}^\top ) $ as $w \in
T_{\hat{y}} S^2$.  It follows that
\[
\grad_1 f(\hat{y},y) = - k \left( I - \hat{y}\hat{y}^\top \right) y
\]
since $\hat{y}$ is in the kernel of the projection
$(I-\hat{y}\hat{y}^\top)$.

The proposed observer  for the projected output system is \eqref{eq:obs:hom}
\begin{equation}\label{eq:obs:s2}
\dot{\hat{y}} = -\Omega_\times \hat{y} + k (I-\hat{y}\hat{y}^\top)y.
\end{equation}
Note that this observer equation is fully posed as a system on
$S^2$ and does not depend on the underlying system on $SO(3)$.
Since $\hat{y}\mapsto f(\hat{y},y)$ is a Morse function with only
one global minimum and no further local minima, Corollary
\ref{cor:conv:hom} applies and the canonical error on $S^2$ for
this observer converges for generic initial conditions to $y_0$.

It is of interest to provide an equivalent form for
Eq.~\ref{eq:obs:s2}.  Observe that
\begin{equation}\label{eq:alg_1}
k(y \hat{y}^\top - \hat{y}y^T) \hat{y} = k(y -  \hat{y}y^T\hat{y}) = k\left( I - \hat{y}\hat{y}^\top \right)  y = - \grad_1 f(\hat{y}, y)
\end{equation}
Moreover, it is easily verified that
\begin{equation}\label{eq:alg_2}
(\hat{y} y^\top - y\hat{y}^\top) = ( \hat{y} \times y )_\times
\end{equation}
Thus, \eqref{eq:obs:s2} can be written
\begin{equation}\label{eq:dot_hat_y}
\dot{\hat{y}} = -\Omega_\times \hat{y} + k( \hat{y} \times y ) \times \hat{y}.
\end{equation}
This observer was originally studied by Metni \etal
\cite{Metni2005a,Metni_etal2006_CEP}.  The derivation given in
earlier work is based on a Lyapunov analysis and includes an
integral term for compensation of gyro bias that does not fit the
analysis framework presented in this paper, however, the main
proportional terms in \cite{Metni2005a,Metni_etal2006_CEP} are
exactly \eqref{eq:dot_hat_y}.

To lift \eqref{eq:obs:s2} to $SO(3)$ it is necessary to construct
a right invariant horizontal distribution for which the right
invariant metric on $SO(3)$ projects to the metric $\langle u,
v\rangle$ on $S^2$ via the differential of the group action.
Define
\[
\h = \{ \omega_\times \in \g \;\vline\; \tr(\omega_\times^\top (y_0)_\times) = 0 \} \equiv \{ \omega_\times \;\vline\; \omega \in \R^3, \omega^\top y_0= 0 \}
\]
The subspace $\h$ is chosen as the orthogonal complement of the
subalgebra of the stabiliser of $h$ under the inner product
$\langle \Omega_\times, \Psi_\times \rangle = \tr(
\Omega_\times^\top \Psi_\times)$ defined on $\g$.  Physically,
this set corresponds to angular velocities expressed in the
inertial frame $\frameA$ that are orthogonal to the reference
direction $y_0$.  The horizontal distribution $H(\hat{R})$ is
defined in the standard manner\footnote{See also the proof of
Theorem~\ref{thm:inv:inno}.}
\[
H(\hat{R}) = \{ \omega_\times \hat{R} \;\vline\; \omega_\times \in \h \}.
\]
For any $\omega_\times \in \h$ there is an element $\Omega_\times
= \Ad_{\hat{R}^\top} \omega_\times$, where $\Ad_{\hat{R}^\top}
\omega_\times = \hat{R}^\top \omega_\times \hat{R}$ is the adjoint
operator, associated with the angular velocity $\omega$ expressed
in the body-fixed-frame.  Thus, one can also write
\[
H(\hat{R}) = \{ \hat{R} \Omega_\times \;\vline\; \Omega_\times = \Ad_{\hat{R}^\top} \omega_\times, \omega_\times \in \h\}.
\]
Next we consider how to compute the lift $(v)^H$ of a tangent
vector in $T_{\hat{y}} S^2$ into $H(\hat{R})$. 
Let $v \in T_{\hat{y}} S^2$ and $\hat{R} \in SO(3)$ such that
$h(\hat{R}, y_0) = \hat{y}$.  Define $\bar{\Omega}(v)$ by the solution to
\[
\bar{\Omega}(v) \times \hat{y} = v, \quad \bar{\Omega}(v)^\top \hat{y} = 0.
\]
Since $v \in T_{\hat{y}} S^2$ is orthogonal to $\hat{y}$ then it
is clear that this vector exists and is unique.  Physically,
$\bar{\Omega}(v)$ is the instantaneous body-fixed-frame velocity
that generates the observed velocity $v$ for the projected
dynamics of $\hat{y}$, and has zero yaw component around $\hat{y}$.
The horizontal lift $(v)^H$ is defined by
\[
(v)^H := \left( \Ad_{\hat{R}} {\bar{\Omega}(v)}_\times \right) \hat{R} = \hat{R}{\bar{\Omega}(v)}_\times.
\]
The element ${\omega(v)}_\times = \Ad_{\hat{R}} {\bar{\Omega}(v)}_\times \in
\h$ corresponds to the element in the subspace $\h$ representing
the lifted angular velocity expressed in the inertial frame.

It remains to show that the metric $\langle u, v\rangle = u^\top
v$ on $S^2$ is induced by a right invariant metric on $H(\hat{R})$.
Observe that for $v, w \in T_{\hat{y}} S^2$ then for any $\hat{R}$
such that $h(\hat{R}, y_0 ) = \hat{y}$, one has
\begin{align}
v^\top w = &  \hat{y}^\top {\bar{\Omega}(v)}_\times^\top {\bar{\Omega}(w)}_\times  \hat{y}
 =   {\bar{\Omega}(v)}^\top \hat{y}_\times^\top \hat{y}_\times \bar{\Omega} (w)  \notag\\
 = &  {\bar{\Omega}(v)}^\top (I - \hat{y} \hat{y}^\top) \bar{\Omega} (w)
=   {\bar{\Omega}(v)}^\top \bar{\Omega} (w)  \notag\\
= & \frac{1}{2} \tr\left( {\bar{\Omega}(v)}_\times^\top {\bar{\Omega}(w)}_\times\right)
=  \frac{1}{2} \langle  {\bar{\Omega}(v)}_\times \hat{R},  {\bar{\Omega}(w)}_\times \hat{R} \rangle \label{eq:metric_G}
\end{align}
where the final line is the natural right invariant metric on the
Lie group $SO(3)$ scaled by $1/2$.  The scaling factor is
associated to the structure of the matrix representation used.
The
above argument verifies that the projection $T_{\hat{R}}
h_{y_0}$ projects the right invariant metric \eqref{eq:metric_G}
restricted to $H(\hat{R})$ to the embedded Euclidean metric on
$T_{\hat{y}} S^2$.

From Equation~\ref{eq:obs} the lift of the observer
\eqref{eq:lift:obs} is
\begin{equation}\label{eq:obs_SO(3)}
\dot{\hat{R}}= \hat{R} \Omega_\times + k \left( (I-\hat{y}\hat{y}^\top)y \right)^{H}
\end{equation}
From Proposition~\ref{prop:proj_obs} it follows that this observer
projects to the observer on $S^2$, Eq.~\ref{eq:obs:s2}.
Corollary~\ref{cor:conv:grp} states that $\hat{R}$ converges to
the set $\{\hat{R} \in SO(3) \;\vline\; h(\hat{R},y_0) = y \}$,
that is, $\hat{R}$ is identified up to the unobservable rotation
around the measured direction.

It is of interest to provide an explicit form for
Eq.~\ref{eq:obs_SO(3)}.  Recalling \eqref{eq:alg_1} and \eqref{eq:alg_2} one has
\[
\grad_1 f(\hat{y}, y) = k ( \hat{y} \times y )_\times \hat{y}
\]
It is easily verified that ${\bar{\Omega}(\grad_1 f)}_\times = k (
\hat{y} \times y)_\times$.   Thus, the lifted observer can  be
written
\begin{align}
\dot{\hat{R}} = &  \hat{R} u - k \left( \Ad_{\hat{R}} ( \hat{y} \times y )_\times  \right) \hat{R}  \notag \\
= &  \hat{R} \left(u - k (\hat{y} \times y )_\times \right) = \hat{R} \left(u + k (y \times \hat{y})_\times \right)
\label{eq:obs_lift}
\end{align}
Equation \ref{eq:obs_lift} is the explicit complementary filter
proposed in \cite[Eq.~(32)]{2008_MahHamPfl.TAC} excluding the
integral introduced in that paper to compensate gyro bias.

The observer \eqref{eq:obs_SO(3)} can also be obtained as a direct
gradient of a lifted cost function.  The lifted cost function
$\tilde{f}$ of $f$ is given by
\[
\tilde{f}(\hat{R},R) = \frac{k}{2}\|\hat{R}^\top y_0 - R^\top y_0\|^2 = \frac{k}{2}\tr\left( (\hat{R}-R)(\hat{R}-R)^\top y_0 y_0^\top\right).
\]
The differential of the lifted cost function with respect to the first coordinate is
\[
\dd_1 \tilde{f}(\hat{R},R)[\hat{R}\Omega_\times] =  k\tr\left( \hat{R}\Omega_\times (\hat{R}-R)^\top y_0y_0^\top \right) - k\tr\left( (\hat{R}-R)\Omega_\times \hat{R}^\top y_0y_0^\top\right)
\]
Rearranging terms and using the notation
$\pp_{\so(3)}(A)=(A-A^\top)/2$ for matrix projection onto the
subspace of anti-symmetric matrices, the differential can be rewritten
\begin{align*}
\dd_1 \tilde{f}(\hat{R},R)(\hat{R}\Omega_\times) = & k\tr\left( \Omega_\times \pp_{\so(3)} \left( (\hat{R}-R)^\top y_0y_0^\top \hat{R} \right) \right) \\
= & 2k \tr\left( \Omega_\times \pp_{\so(3)} \left( (\hat{y}-y)\hat{y}^\top  \right) \right) \\
= & 2k \tr\left( \Omega_\times  (\hat{y}y^\top -y \hat{y}^\top ) \right).
\end{align*}
Using the metric \eqref{eq:metric_G} one obtains the gradient
\[
\grad_1 \tilde{f}(\hat{R},R) = - k\hat{R} (\hat{y}y^\top -y \hat{y}^\top )= - k\hat{R} (\hat{y} \times y ).
\]
Note that factor of $1/2$ in the metric cancels the factor of $2$
in the differential.  The gradient-like observer on the Lie-group
is
\[
\hat{R} = \hat{R} u - k\hat{R} (\hat{y} \times y ) = \hat{R} u - (\grad_1 f(\hat{R}, R))^H
\]
(recalling \eqref{eq:dot_hat_y}) and one obtains the observer
\eqref{eq:obs_lift} as expected.

%%%%%%%%%%%%%%%%%%%%%%%%%%%%%%%%%%%%%%%%%%%%%%%%%%%%%%%%%%%%
\section{Conclusions}\label{sec:7_conclusions}

This paper provides a comprehensive analysis of the design of
observers for invariant systems posed on finite-dimensional
connected Lie groups with measurements generated by a
complementary group action on an associated homogeneous space. The
observer synthesis problem can be tackled for the projected system
on the homogeneous output space based on a canonical decomposition
of the observer dynamics into a synchronous internal model and an
equivariant innovation.  A gradient-like construction for the
innovation term was proposed that leads to strong stability
properties of the observer, both for the projected system on the
homogeneous output space, and for the lifted system on the
Lie-group (stable to the unobservable subgroup).

%%%%%%%%%%%%%%%%%%%%%%%%%%%%%%%%%%%%%%%%%%%%%%%%%%%%%%%%%%%%
\bibliographystyle{abbrv}
\bibliography{hfilter}
%%%%%%%%%%%%%%%%%%%%%%%%%%%%%%%%%%%%%%%%%%%%%%%%%%%%%%%%%%%%
\end{document}